\newtheorem{theorem}{Theorem}[section]
\newtheorem{corollary}[theorem]{Corollary}
\newtheorem{proposition}[theorem]{Proposition}
\newtheorem{lemma}[theorem]{Lemma}
\newtheorem{definition}[theorem]{Definition}
\newtheorem{example}[theorem]{Example}
\newcommand{\cB}{{\mathcal B}}
\newcommand{\cI}{{\mathcal I}}
\newcommand{\cJ}{{\mathcal J}}
\newcommand{\cO}{{\mathcal O}}
\newcommand{\cX}{{\mathcal X}}
\newcommand{\cM}{{\mathcal M}}
\newcommand{\cOXp}{{\cO_X^p}}
\newcommand{\bC}{{\mathbb{C}}}
\newcommand{\bN}{{\mathbb{N}}}
\begin{document}
	
	\title{THE GENERIC EQUIVALENCE AMONG THE LIPSCHITZ SATURATIONS OF A SHEAF OF MODULES}
	\author{Terence Gaffney and Thiago da Silva}
	
	\maketitle
	
	\begin{abstract}
		{\small In this work, we extend the concept of the Lipschitz saturation of an ideal to the context of modules in some different ways, and we prove they are generically equivalent.}
	\end{abstract}
	
	\let\thefootnote\relax\footnote{2010 \textit{Mathematics Subjects Classification} 32S15, 14J17, 32S60
		
		\textit{Key words and phrases.}Bi-Lipschitz Equisingularity, Lipschitz saturation of ideals and modules, Double of modules.}

	\section*{Introduction}
	
	The definition of the Lipschitz saturation of an ideal appears in \cite{G2}, in the context of bi-Lipschitz equisingularity. The study of bi-Lipschitz equisingularity was started by Zariski \cite{Z}, Pham and Teissier \cite{PT, P1}, and was further developed by Lipman \cite{L}, Mostowski \cite{M1, M2}, Parusinski \cite{PA1, PA2}, Birbrair \cite{B} and others.
	
	The Lipschitz saturation and the double of an ideal $I$, denoted $I_S$ and $I_D$, respectively, were defined in \cite{G2}, where $I$ is a sheaf of ideals of $\cO_X$, the analytic local ring of an analytic variety $X$. The ideal $I_S$ consists of elements in $\cO_X$ whose quotient of its pullback by the blowup-map, with a local generator of the pullback of $I$, is Lipschitz. The double $I_D$ is the submodule of $\cO_{X\times X}^2$ generated by $(h\circ\pi_1,h\circ\pi_2)$, $h\in I$, where $\pi_1,\pi_2:X\times X\rightarrow X$ are the projections. Theorem 2.3 of \cite{G2} gives an equivalence between $I_S$ and the integral closure of $I_D$, namely: $$h\in I_S\iff h_D\in\overline{I_D}.$$
	
	In \cite{SG}, the authors generalize the notion of the double for a sheaf of modules $\cM$. We will recall the main of these definitions and basic results about the double in Section 1. 
	
	As seen in \cite{G3}, we already know that for a submodule $M \subseteq \cO_{X,x}^p$ and $h \in \cO_{X,x}^p$, $$h\in\overline{M}\iff I_k(h,M)\subseteq\overline{I_k(M)}$$
	
	In Proposition \ref{P18}, we obtain a new criterion for the integral closure of $M$ using ideals, and all these mentioned equivalences inspire definitions of Lipschitz saturations by replacing the integral closure with the Lipschitz saturation of the ideal appearing on the right-hand side of the characteristic condition. 
	
	In Section 2, we will examine each of these definitions in detail, explore some basic properties, and discuss the relationships between them. Finally, in Section 3, we will demonstrate that these definitions coincide generically in a sheaf of submodules of $\cO_{X}^p$.

		
		
		
		

	\section*{Acknowledgements}
	
	The authors are grateful to Nivaldo Grulha Jr. for his careful reading of this work and to Maria Aparecida Soares Ruas for the valuable conversations about the subject of this work, especially in the generic equivalence among the Lipschitz saturations defined here.
	
	The first author was supported in part by PVE-CNPq, grant 401565/2014-9. The second author was supported by Funda\c{c}\~ao de Amparo \`a Pesquisa do Estado de S\~ao Paulo - FAPESP, Brazil, grant 2013/22411-2.

	\section{Background on Lipschitz saturation of ideals and integral closure of modules}\label{sec0}
	
	The Lipschitz saturation of a complex analytic local algebra was defined by Pham and  Teissier in \cite{PT}.
	
	\begin{definition}
		Let $I$ be an ideal of $\cO_{X,x}$,  $SB_I(X)$ the saturation of the blow-up and $\pi_S: SB_I(X)\rightarrow X$ the projection map. The {\bf Lipschitz saturation} of the ideal $I$ is denoted $I_S$, and is the ideal $$I_S:=\{h\in\cO_{X,x}\mid\pi_S^*(h)\in\pi_S^*(I)\}.$$
	\end{definition}
	
	Since the normalization of a local ring $A$ contains the Lipschitz Saturation of $A$, it follows that $I\subseteq I_S\subseteq\overline{I}$. In particular, if $I$ is integrally closed, then $I_S=\overline{I}$.
	
	This definition can be given an equivalent statement using the theory of integral closure of modules. Since Lipschitz conditions depend on controlling functions at two different points as the points come together, we should look for a sheaf defined on $X\times X$. We describe a way of moving from a sheaf of ideals on $X$ to a sheaf on $X\times X$. 
	
	Let $\pi_1,\pi_2: X\times X\rightarrow X$ be the projections to the i-th factor, and let $h\in\cO_{X,x}$. Define $h_D\in\cO_{X\times X,(x,x)}^{2}$ as $(h\circ\pi_1,h\circ\pi_2)$, called the double of $h$. We define the double of the ideal $I$, denoted $I_D$, as the submodule of $\cO_{X\times X,(x,x)}^2$ generated by $h_D$, where $h$ is an element of $I$.
	
	We can see in \cite{G2} the following result gives a link between Lipschitz saturation and integral closure of modules.
	
	\begin{theorem}[\cite{G2}, Theorem 2.3]
		Suppose $(X,x)$ is a complex analytic set germ, $I\subseteq\cO_{X,x}$ and $h\in\cO_{X,x}$.  
		\noindent Then \begin{center} $h\in I_S$ if, and only if, $h_D\in\overline{I_D}$.\end{center} 
	\end{theorem}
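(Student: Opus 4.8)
The plan is to prove both implications via valuative (curve) criteria, using as a bridge that $\pi_S\colon SB_I(X)\to X$ is proper --- it is finite over the blow-up $B_I(X)$ (the Lipschitz saturation of a reduced analytic germ is a finite module over it), and $B_I(X)$ is projective over $X$ --- so that every analytic arc on $X$ lifts, after a harmless reparametrization $t\mapsto t^{k}$, to an arc on $SB_I(X)$. On one side, $h_D\in\overline{I_D}$ is equivalent, by the curve criterion for integral closure of modules (equivalently, by combining the criterion recalled above with the arc criterion for ideals), to: for every arc $\gamma=(\gamma_1,\gamma_2)\colon(\bC,0)\to(X\times X,(x,x))$ one has $\gamma^{*}h_D=(h\circ\gamma_1,h\circ\gamma_2)\in\gamma^{*}I_D$, the $\bC\{t\}$-submodule of $\bC\{t\}^{2}$ generated by $\{(g\circ\gamma_1,g\circ\gamma_2):g\in I\}$. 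On the other side, $h\in I_S$ means that on $SB_I(X)$ one can write $\pi_S^{*}h=\eta\cdot g_0$, with $g_0$ a local generator of the invertible sheaf $\pi_S^{*}I\cdot\cO_{SB_I(X)}$ and $\eta$ regular; by Pham--Teissier's description of the saturation, such an $\eta$ is precisely a function that is Lipschitz along $B_I(X)$. I would first dispose of the arcs $\gamma$ with $\gamma_1$ or $\gamma_2$ inside $V(I)$, or with $\gamma_1=\gamma_2$: these only test $h\in\overline I$, which holds since $I_S\subseteq\overline I$. For the remaining arcs neither $\gamma_i$ lies in $V(I)$, so both lift to arcs $\tilde\gamma_1,\tilde\gamma_2$ on $SB_I(X)$ after a common reparametrization of $\gamma$.

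For $h\in I_S\Rightarrow h_D\in\overline{I_D}$, I would fix such a $\gamma$ with its lifts, pick $\hat g\in I$ with $\hat g\circ\pi_S$ generating $\pi_S^{*}I$ along $\tilde\gamma_1$, and use $\pi_S^{*}h=\eta\,g_0$ together with $\hat g\circ\pi_S=(\text{unit})\cdot g_0$ near the endpoint of $\tilde\gamma_1$ to rewrite
\[
\gamma^{*}h_D=a\cdot(\hat g\circ\gamma_1,\ \hat g\circ\gamma_2)\ +\ \big(0,\ (g_0\circ\tilde\gamma_2)\cdot\rho\big),\qquad a\in\bC\{t\},
\]
where the error $\rho$ is, up to regular factors, the difference of the two local expressions of $\eta$ along $\tilde\gamma_1$ and $\tilde\gamma_2$. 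The first summand is in $\gamma^{*}I_D$ by construction; the point is then that the Lipschitz estimate for $\eta$ forces $\mathrm{ord}_t(\rho)$ to be at least the order of the second elementary divisor of the rank-$\le 2$ module $\gamma^{*}I_D\subseteq\bC\{t\}^{2}$, which is exactly what is needed to put the error term into $\gamma^{*}I_D$; hence $\gamma^{*}h_D\in\gamma^{*}I_D$ for every $\gamma$.

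For the converse, I would show that near each point of $SB_I(X)$ over $x$ the meromorphic function $\eta=(h\circ\pi)/g_0$ on $B_I(X)$ is regular, i.e. Lipschitz along $B_I(X)$; by the arc characterization of Lipschitz functions this is tested on pairs of analytic arcs on $B_I(X)$ sharing a centre, and any such pair lies over a pair $(\gamma_1,\gamma_2)$ of arcs on $X$ with common centre, hence over an arc $\gamma$ on $X\times X$ through $(x,x)$. Applying $h_D\in\overline{I_D}$ along $\gamma$ gives $(h\circ\gamma_1,h\circ\gamma_2)\in\gamma^{*}I_D$; after the column operation replacing each generator $(g\circ\gamma_1,g\circ\gamma_2)$ by $(g\circ\gamma_1,\ g\circ\gamma_2-g\circ\gamma_1)$ and reading off second components, this yields exactly the order estimate on $h\circ\gamma_1-h\circ\gamma_2$ (relative to the pulled-back generator of $I$) that encodes the Lipschitz bound for $\eta$ along the chosen pair of arcs on $B_I(X)$. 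Running over all such pairs, $\eta$ is Lipschitz, hence regular on $SB_I(X)$, so $\pi_S^{*}h\in\pi_S^{*}I$ and $h\in I_S$.

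The hard part will be the bookkeeping behind the displayed decomposition and its converse. The two lifts $\tilde\gamma_1,\tilde\gamma_2$ may land at different points of $SB_I(X)$, where $\pi_S^{*}I$ has different local generators, so one must carefully compare them --- tracking the section $\hat g\circ\pi_S$ of the invertible sheaf $\pi_S^{*}I$ along both arcs --- and then show that the resulting discrepancy, together with the Lipschitz regularity of $\eta$, produces precisely the order of $\rho$ that is both necessary and sufficient for membership in $\gamma^{*}I_D$. Equivalently, one must match the metric content of ``$\eta$ Lipschitz along $B_I(X)$'' with the module-theoretic content of ``$\gamma^{*}h_D\in\gamma^{*}I_D$ for all $\gamma$'' through the elementary divisors of $\gamma^{*}I_D$ over $\bC\{t\}$ and the identification of its second elementary divisor with the diagonal ``modulus'' of $B_I(X)$ read along $\gamma$. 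An alternative entry point, avoiding some of this, is the criterion $h_D\in\overline{I_D}\iff I_k(h_D,I_D)\subseteq\overline{I_k(I_D)}$ recalled in the introduction: since $I_1(h_D,I_D)=\pi_1^{*}I+\pi_2^{*}I+(h\circ\pi_1,h\circ\pi_2)$ and $I_1(I_D)=\pi_1^{*}I+\pi_2^{*}I$, the case $k=1$ follows at once from $h\in\overline I$, and the entire content reduces to $k=2$, i.e. to $(h\circ\pi_1)(g\circ\pi_2)-(g\circ\pi_1)(h\circ\pi_2)\in\overline{I_2(I_D)}$ for all $g\in I$ --- the quantity measuring on $X\times X$ the failure of $h/g$ to be constant in the diagonal direction --- and it is here that the Lipschitz hypothesis on $h$ must be spent; this too is naturally checked arcwise, and runs into the same obstacle.
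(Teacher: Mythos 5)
The theorem you were asked about is not proved in the present paper; it is quoted as Theorem 2.3 of Gaffney's \emph{Bi-Lipschitz equivalence, integral closure and invariants} and used as a black box. So there is no ``paper's own proof'' to compare against; I can only assess your sketch on its own terms and against what the cited argument must contain.

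Your sketch identifies the right skeleton, and your ``alternative entry point'' at the end is in fact the cleanest route and is essentially how one should organize the proof. Concretely: since $h\in I_S$ forces $h\in\overline I$ (and, conversely, $h_D\in\overline{I_D}$ forces $h\in\overline I$ by Proposition~\ref{P12}), the module $(h_D,I_D)$ has the same generic rank as $I_D$, namely $2$ by Theorem~\ref{T2.9}. Proposition~\ref{proposition G} then converts $h_D\in\overline{I_D}$ into $I_2(h_D,I_D)\subseteq\overline{I_2(I_D)}$, and the only new generators beyond $I_2(I_D)$ are the differences $(h\circ\pi_1)(g\circ\pi_2)-(g\circ\pi_1)(h\circ\pi_2)$, $g\in I$; that is precisely the expression whose boundedness relative to $I_2(I_D)$ encodes the Lipschitz bound on $h/g$ along the blow-up. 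This matches the role of $I_\Delta$-twisted $2\times 2$ minors that the rest of the paper exploits (compare Lemma~\ref{L4.20}). Your first, more hands-on route through the Smith normal form of $\gamma^*I_D\subseteq\bC\{t\}^2$ is also workable, and the identification you assert --- that the second elementary divisor of $\gamma^*I_D$ is what must dominate the error term $\rho$ --- is correct: it equals (up to a unit) $\gamma^*I_2(I_D)/\gamma^*I_1(I_D)$, so it measures exactly the same quantity as the $k=2$ reduction.

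Where the sketch still has real gaps: (i) the central step, matching the metric inequality ``$\eta$ Lipschitz on $B_I(X)$'' with the valuative inequality on $\mathrm{ord}_t$ of the $2\times2$ minor, is stated as something you ``would show,'' but this is the whole content; you need the Pham--Teissier characterization of the saturation in a form compatible with the arc-pair test, and you need to be careful that the Lipschitz estimate on $B_I(X)\subset X\times\bP^N$ pulls back to the order bound along pairs of arcs on $X$ with a common center, including arcs whose lifts $\tilde\gamma_1,\tilde\gamma_2$ hit different points of $SB_I(X)$ lying over the same exceptional fiber. (ii) You should not phrase the reduction as ``check $k=1$ and $k=2$''; Proposition~\ref{proposition G} asks only for $k$ equal to the generic rank of $(h_D,I_D)$, which is $2$ here; the $k=1$ containment is a consequence of $h\in\overline I$ but is not an additional hypothesis to verify. (iii) The notation $I_1(h_D,I_D)=\pi_1^{*}I+\pi_2^{*}I+(h\circ\pi_1,h\circ\pi_2)$ should read as the ideal generated by $h\circ\pi_1$ and $h\circ\pi_2$, not a submodule; harmless, but worth tightening. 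As it stands this is a plausible and well-aimed outline, not a proof; the missing piece is precisely the order-estimate lemma you defer, and the cited source should be consulted for that lemma rather than reinvented.
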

	
	Using the Lipschitz saturation of ideals (and doubles), in \cite{G1}, we defined the infinitesimal Lipschitz conditions for hypersurfaces.
	
	In \cite{G3}, there is a way to make a link between the integral closure of modules and ideals using minors of a matrix of generators of $\cM$. 
	
	Let $\cM$ be a submodule sheaf of $\cOXp$, and $[\cM]$ a matrix of generators of $\cM$. For each $k$, let $I_k(\cM)$ denote the ideal of $\cO_X$ generated by the $k\times k$ minors of $[\cM]$. If $h\in\cOXp$, let $(h,\cM)$ be the submodule generated by $h$ and $\cM$.
	
	\begin{proposition}[\cite{G3}, Corollary 1.8]\label{proposition G}
		Suppose $(X,x)$ is a complex analytic germ with irreducible components $\{V_i\}$. Then, $h\in\overline{\cM}$ at $x$ if, and only if, $I_{k_i}((h,\cM_i))\subseteq \overline {I_{k_i}(\cM_i)}$ at $x$, where $\cM_i$ is the submodule of $\cO_{V_i,x}^p$ induced from $\cM$ and $k_i$ is the generic rank of $(h,\cM_i)$ on $V_i$. 
	\end{proposition}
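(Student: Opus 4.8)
The plan is to deduce the reducible case from the irreducible one, component by component, with the passage between the two governed by the curve (valuative) criterion for integral closure of modules. So the first thing I would establish is the reduction: $h\in\overline{\cM}$ at $x$ if and only if $h|_{V_i}\in\overline{\cM_i}$ at $x$ for every $i$. The implication $(\Rightarrow)$ is immediate, since the map $\cO_{X,x}^p\to\cO_{V_i,x}^p$ induced by restriction carries any integral dependence relation for $h$ over $\cM$ to one for $h|_{V_i}$ over $\cM_i$. For $(\Leftarrow)$ I would invoke the fact that $g\in\overline{N}$ if and only if $\phi^*g\in\overline{\phi^*N}$ for every analytic arc germ $\phi\colon(\bC,0)\to(X,x)$, where one may even take $\phi$ to factor through a normalization so that its source ring is a discrete valuation ring. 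A non-constant such $\phi$ has irreducible image germ, hence lands in a single $V_i$, and then $\phi^*\cM=\phi^*\cM_i$ and $\phi^*h=\phi^*(h|_{V_i})$; conversely, every arc in $V_i$ is an arc in $X$. Hence testing $\overline{\cM}$ over all arcs of $X$ amounts to testing all the $\overline{\cM_i}$ over the arcs of the respective $V_i$, which is the asserted equivalence.

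Next I would treat a single, now irreducible, component $V_i$. Since $\cO_{V_i,x}$ is a domain, $(h,\cM_i)$ has a well-defined generic rank $k_i$ on $V_i$, so the known equivalence ``$g\in\overline{N}\iff I_k\big((g,N)\big)\subseteq\overline{I_k(N)}$'', with $k$ the generic rank of $(g,N)$, applies with $N=\cM_i$, $g=h|_{V_i}$, $k=k_i$, giving that $h|_{V_i}\in\overline{\cM_i}$ at $x$ if and only if $I_{k_i}\big((h,\cM_i)\big)\subseteq\overline{I_{k_i}(\cM_i)}$ at $x$. Here it is worth recording the two cases for $k_i$: if $h|_{V_i}$ lies in the row space of $\cM_i$ over the fraction field of $\cO_{V_i,x}$, then $k_i$ is the generic rank of $\cM_i$ and this is the substantive case; if not, then $k_i$ exceeds the generic rank of $\cM_i$ by one, so $I_{k_i}(\cM_i)=0$ while $I_{k_i}\big((h,\cM_i)\big)\neq 0$, and the right-hand condition fails --- in agreement with the fact that an $h$ not generically in the row space cannot be integral over $\cM_i$. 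Combining this component-wise equivalence with the reduction of the first step proves the proposition.

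The real content, and hence the main obstacle, lies in the two ingredients being invoked rather than in the reduction itself: the curve criterion for integral closure of modules, which is what makes it legitimate to split $\overline{\cM}$ into component contributions; and the irreducible-case equivalence via $k\times k$ minors, whose proof passes through the structure of a finitely generated submodule of a free module over the discrete valuation ring attached to an arc. There, Smith normal form identifies the order of the pulled-back module with the order of its $k_i$-th minor ideal, and the membership $\phi^*h\in\overline{\phi^*\cM_i}$ is recast as a containment of the pulled-back $k_i$-th minor ideals, both principal in that DVR. Granting these, the only point demanding real attention is that the generic rank must be recomputed on each component --- precisely where a single global index would fail on a reducible germ, and what forces the component-by-component formulation in the statement.
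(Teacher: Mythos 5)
The paper states this result as a citation to \cite{G3}, Corollary 1.8, and gives no proof of its own, so there is no in-paper argument to compare against. Your reconstruction follows the standard route used in the cited source: reduce to components via the curve criterion, using the fact that any non-constant arc germ has irreducible image and hence factors through a single $V_i$ (and conversely every arc in $V_i$ is an arc in $X$), and then invoke the irreducible-case equivalence $g\in\overline{N}\iff I_k\bigl((g,N)\bigr)\subseteq\overline{I_k(N)}$ with $k$ the generic rank of $(g,N)$. Your case split on $k_i$ --- equal to the generic rank of $\cM_i$ in the substantive case, versus one larger in the degenerate case where $I_{k_i}(\cM_i)=0$ and the right-hand condition fails trivially --- is a welcome point of care that the bare statement leaves implicit.

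The one thing to be aware of is that the bulk of the mathematical content is deferred to the irreducible-case equivalence you invoke, which is precisely Lemma 1.7/Corollary 1.8 of \cite{G3}. Your closing sketch (pull back along an arc, use Smith normal form over the DVR $\cO_{\bC,0}$, compare $k_i$-th minor ideals as principal ideals) is the right idea, but a self-contained proof would also need to treat the arcs $\phi$ along which $\phi^*\cM_i$ drops rank strictly below $k_i$: there both minor ideals vanish and the ``order of one principal ideal versus the other'' comparison degenerates, so one must argue separately (or show such arcs impose no further constraint). As a reconstruction of a cited result rather than a full proof, what you have is sound.
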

	
	Now we recall some basic results about the double of a module developed in \cite{SG}.
	
	Let $X \subseteq \mathbb{C}^n$ be an analytic space, and let $\cM$ be an $\cO_X $-submodule of $\cO_X^p$. Consider the projection maps $\pi_1,\pi_2: X \times X \rightarrow X$. We assume that $\cM$ is finitely generated by global sections.
	
	\begin{definition}
		Let $h\in\cOXp$. The double of $h$ is defined as the element
		
		$$h_D:=(h\circ\pi_1,h\circ\pi_2) \in\cO_{X\times X}^{2p}.$$
		
		The double of $\cM$ is denoted by $\cM_D$, and is defined as the $\cO_{X\times X}$-submodule of $\cO_{X\times X}^{2p}$ generated by $\{h_D \mid  h\in \cM\}$.
		
	\end{definition}
	
	Consider $z_1,\hdots,z_n$ the coordinates on $\mathbb{C}^n$. The next lemma is a useful tool to deal with the double.
	
	\begin{lemma}[\cite{SG}, Lemma 2.2]\label{L1} Suppose $\alpha\in\cO_X$ and $h\in\cOXp$. Then:
		
		\begin{enumerate}
			\item [a)] $(\alpha h)_D=-(0,(\alpha\circ\pi_1-\alpha\circ\pi_2)(h\circ\pi_2))+(\alpha\circ\pi_1)h_D$;
			
			\item [b)] $(0,(\alpha\circ\pi_1-\alpha\circ\pi_2)(h\circ\pi_2))\in \cM_D$, for all $h\in \cM$ and $\alpha\in\cO_X$;
			
			\item [c)] $\alpha\circ\pi_1-\alpha\circ\pi_2 \in I(\Delta(X))=(z_1\circ\pi_1-z_1\circ\pi_2,\ldots, z_n\circ\pi_1-z_n\circ\pi_2)$, for all $\alpha\in\cO_X$;
			
			\item [d)] $(g+h)_D=g_D+h_D$, for all $g,h\in\cOXp$. 
		\end{enumerate}
	\end{lemma}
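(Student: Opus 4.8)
The strategy is straightforward: parts (a) and (d) are immediate unwindings of the definition of the double, part (b) is a purely formal consequence of (a), and part (c) is the only statement requiring real work, namely a Hadamard-type decomposition carried out in the ambient space $\mathbb{C}^n$. So I would first settle (a) and (d) by direct computation, then deduce (b), and finally prove (c).

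For (d), expanding $(g+h)_D=((g+h)\circ\pi_1,(g+h)\circ\pi_2)$ and distributing composition with $\pi_1,\pi_2$ over the sum in each of the two $\cO_{X\times X}^p$-blocks gives $g_D+h_D$ at once. For (a), I would write the left-hand side as $(\alpha h)_D=\big((\alpha\circ\pi_1)(h\circ\pi_1),\,(\alpha\circ\pi_2)(h\circ\pi_2)\big)$ and expand the right-hand side block by block: its first block is $0+(\alpha\circ\pi_1)(h\circ\pi_1)$, and its second block is $-(\alpha\circ\pi_1-\alpha\circ\pi_2)(h\circ\pi_2)+(\alpha\circ\pi_1)(h\circ\pi_2)=(\alpha\circ\pi_2)(h\circ\pi_2)$, so the two sides agree. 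Then (b) follows by rewriting (a) as $(0,(\alpha\circ\pi_1-\alpha\circ\pi_2)(h\circ\pi_2))=(\alpha\circ\pi_1)\,h_D-(\alpha h)_D$: when $h\in\cM$ we also have $\alpha h\in\cM$, so $h_D$ and $(\alpha h)_D$ both lie in $\cM_D$ by definition of the double of a module, and since $\cM_D$ is an $\cO_{X\times X}$-submodule of $\cO_{X\times X}^{2p}$, the term $(\alpha\circ\pi_1)h_D$ and hence the whole difference lies in $\cM_D$.

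For (c), choose a holomorphic representative $\widetilde\alpha$ of $\alpha$ on a neighborhood $U$ of the base point in $\mathbb{C}^n$, shrunk so as to be convex. The integral form of Taylor's formula gives, on $U\times U$,
$$\widetilde\alpha(x)-\widetilde\alpha(y)=\sum_{i=1}^n(x_i-y_i)\,g_i(x,y),\qquad g_i(x,y):=\int_0^1\frac{\partial\widetilde\alpha}{\partial z_i}\big(y+t(x-y)\big)\,dt,$$
where each $g_i$ is holomorphic on $U\times U$. Restricting this identity to $X\times X$ and identifying $x_i-y_i$ with $z_i\circ\pi_1-z_i\circ\pi_2$ yields $\alpha\circ\pi_1-\alpha\circ\pi_2=\sum_i(z_i\circ\pi_1-z_i\circ\pi_2)\,(g_i|_{X\times X})$, which is exactly membership in the ideal $(z_1\circ\pi_1-z_1\circ\pi_2,\ldots,z_n\circ\pi_1-z_n\circ\pi_2)$; that this ideal is $I(\Delta(X))$ is the classical description of the kernel of $\cO_{X\times X}\to\cO_X$ coming from the diagonal immersion.

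The main — and essentially the only — obstacle is this last part: one must take care to pass to a holomorphic representative on a genuinely \emph{convex} neighborhood so that the remainder terms $g_i$ are honestly holomorphic near the diagonal, and to observe that the resulting decomposition is independent, modulo the diagonal ideal, of the chosen representative of $\alpha$, so that the conclusion is well posed on $\cO_X$. Everything else is bookkeeping with the definition of $(\,\cdot\,)_D$.
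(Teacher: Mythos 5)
This lemma is recalled in the paper from \cite{SG} without proof, so there is no in-paper argument to compare against; evaluated on its own terms, your proof is correct and is the natural one. Parts (a) and (d) are indeed direct coordinate unwindings of the definition, and your block-by-block computation for (a) checks out. Part (b) correctly uses that $\alpha h\in\cM$ together with the $\cO_{X\times X}$-module structure on $\cM_D$ to place each term of the rearranged identity from (a) in $\cM_D$. For (c), the Hadamard-type integral Taylor expansion on a convex representative neighborhood, followed by restriction to $X\times X$, is exactly the right mechanism; one could also argue monomial by monomial via power series, but the integral form is cleaner and handles the holomorphic dependence of the remainder coefficients $g_i$ uniformly. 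The one place you wave your hands, the identification $I(\Delta(X))=(z_1\circ\pi_1-z_1\circ\pi_2,\ldots,z_n\circ\pi_1-z_n\circ\pi_2)$, is legitimate to cite as classical, but it is worth noting that the same Hadamard decomposition in fact proves the nontrivial inclusion: for $f\in I(\Delta(X))$, lift to $\widetilde f$ on a convex neighborhood in $\bC^n\times\bC^n$, write $\widetilde f(x,y)=\bigl(\widetilde f(x,y)-\widetilde f(x,x)\bigr)+\widetilde f(x,x)$, factor the first bracket by Hadamard, and observe the second term restricts to $0$ on $X\times X$ because $f$ vanishes on the diagonal. So the whole of (c) really rests on the single Taylor decomposition you already set up.
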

	
	In \cite{SG}, we see that it is possible to obtain a set of generators for $\cM_D$ from a set of generators of $\cM$.

	\begin{proposition}[\cite{SG}, Proposition 2.3]\label{P2}
		Suppose that $\cM$ is generated by $\{h_1,\ldots,h_r \}$. Then, the following sets are generators of $\cM_D$:
		\begin{enumerate}
			
			\item [a)] $\cB=\{(h_j)_D\}_{j=1}^r \cup \{(0_{\cO_{X\times X}^{p}},(z_i\circ\pi_1-z_i\circ\pi_2)(h_j\circ\pi_2))\}{_{j=1}^{r}}_{i=1}^n$;
			
			\item [b)] $\cB'=\{(h_j)_D\}_{j=1}^r  \cup \{((z_i\circ\pi_1-z_i\circ\pi_2)(h_j\circ\pi_1),0_{\cO_{X\times X}^{p}})\}{_{j=1}^{r}}_{i=1}^n$;
			
			\item [c)] $\cB''=\{(h_j)_D\}_{j=1}^r  \cup \{(z_ih_j)_D\}{_{j=1}^{r}}_{i=1}^n$.
			
		\end{enumerate}
		
	\end{proposition}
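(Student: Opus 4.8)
The plan is to prove each of the three sets generates $\cM_D$ by the standard two-step argument: first that the set is contained in $\cM_D$, and then that $\cM_D$ is contained in the $\cO_{X\times X}$-module it generates. The containments in $\cM_D$ are immediate from Lemma \ref{L1}. For $\cB$, each $(h_j)_D\in\cM_D$ because $h_j\in\cM$, and each $(0_{\cO_{X\times X}^p},(z_i\circ\pi_1-z_i\circ\pi_2)(h_j\circ\pi_2))$ lies in $\cM_D$ by part (b) applied with $\alpha=z_i$ and $h=h_j$. For $\cB''$, each $(z_ih_j)_D\in\cM_D$ since $z_ih_j\in\cM$. For $\cB'$, one writes $((z_i\circ\pi_1-z_i\circ\pi_2)(h_j\circ\pi_1),0_{\cO_{X\times X}^p})=(z_ih_j)_D-(z_i\circ\pi_2)(h_j)_D$, a difference of two elements of $\cM_D$.

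For the reverse inclusion for $\cB$, I would take an arbitrary generator $h_D$ of $\cM_D$ with $h\in\cM$, write $h=\sum_j\alpha_jh_j$ with $\alpha_j\in\cO_X$, and use Lemma \ref{L1}(d) to get $h_D=\sum_j(\alpha_jh_j)_D$. Expanding each term by Lemma \ref{L1}(a) gives $(\alpha_jh_j)_D=(\alpha_j\circ\pi_1)(h_j)_D-(0_{\cO_{X\times X}^p},(\alpha_j\circ\pi_1-\alpha_j\circ\pi_2)(h_j\circ\pi_2))$; the first summand lies in the $\cO_{X\times X}$-span of $\cB$ because $\alpha_j\circ\pi_1\in\cO_{X\times X}$. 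For the second summand, Lemma \ref{L1}(c) lets me write $\alpha_j\circ\pi_1-\alpha_j\circ\pi_2=\sum_i\beta_{ij}(z_i\circ\pi_1-z_i\circ\pi_2)$ with $\beta_{ij}\in\cO_{X\times X}$, so that $(0_{\cO_{X\times X}^p},(\alpha_j\circ\pi_1-\alpha_j\circ\pi_2)(h_j\circ\pi_2))=\sum_i\beta_{ij}\,(0_{\cO_{X\times X}^p},(z_i\circ\pi_1-z_i\circ\pi_2)(h_j\circ\pi_2))$ is also in the span of $\cB$. This yields $\cM_D\subseteq\langle\cB\rangle\subseteq\cM_D$, proving (a).

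For (b) I would run the same computation using the mirror identity $(\alpha h)_D=(\alpha\circ\pi_2)h_D+((\alpha\circ\pi_1-\alpha\circ\pi_2)(h\circ\pi_1),0_{\cO_{X\times X}^p})$, which is checked directly (or derived from Lemma \ref{L1}(a) by the symmetry swapping $\pi_1$ and $\pi_2$), again combined with Lemma \ref{L1}(c). For (c), rather than repeat the argument it is cleaner to deduce it from (a): the identity $(0_{\cO_{X\times X}^p},(z_i\circ\pi_1-z_i\circ\pi_2)(h_j\circ\pi_2))=(z_i\circ\pi_1)(h_j)_D-(z_ih_j)_D$, which is just an instance of Lemma \ref{L1}(a), shows every element of $\cB$ lies in $\langle\cB''\rangle$, hence $\cM_D=\langle\cB\rangle\subseteq\langle\cB''\rangle\subseteq\cM_D$.

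The argument is essentially formal once Lemma \ref{L1} is in hand; the one point requiring care is the bookkeeping in the reverse inclusion for $\cB$, where one must verify that the diagonal correction terms $(\alpha_j\circ\pi_1-\alpha_j\circ\pi_2)(h_j\circ\pi_2)$ genuinely reduce, via part (c), to $\cO_{X\times X}$-combinations of the finitely many chosen generators $(0_{\cO_{X\times X}^p},(z_i\circ\pi_1-z_i\circ\pi_2)(h_j\circ\pi_2))$, and not merely to unspecified elements of $\cM_D$. This is the step I expect to be the crux, and it is exactly what part (c) of Lemma \ref{L1} is designed to supply.
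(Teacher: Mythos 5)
Your proof is correct, and it is exactly the argument that Lemma~\ref{L1} is designed to feed into: the paper cites this result from~\cite{SG} without reproducing the proof, but the decomposition $(\alpha_j h_j)_D = (\alpha_j\circ\pi_1)(h_j)_D - (0,(\alpha_j\circ\pi_1-\alpha_j\circ\pi_2)(h_j\circ\pi_2))$ combined with Lemma~\ref{L1}(c) to reduce the diagonal factor to an $\cO_{X\times X}$-combination of the $z_i\circ\pi_1-z_i\circ\pi_2$ is the standard route, and your handling of (b) by the mirrored identity and (c) by the identity $(0,(z_i\circ\pi_1-z_i\circ\pi_2)(h_j\circ\pi_2))=(z_i\circ\pi_1)(h_j)_D-(z_ih_j)_D$ is correct. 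No gaps.
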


	The following proposition establishes a connection between the integral closure of the double of a module and the integral closure of the original module.
	
	\begin{proposition}[\cite{SG}, Proposition 2.9]\label{P12}
		Let $h\in\cOXp$. If $h_D\in\overline{\cM_D}$ at $(x,x')$ then $h\in\overline{\cM}$ at $x$ and $x'$.
	\end{proposition}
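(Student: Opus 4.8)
The plan is to reduce the statement about modules to a statement about integral closure of ideals via minors, and then to use the definition of integral closure through analytic arcs. The cleanest route exploits Proposition \ref{proposition G}: $h_D \in \overline{\cM_D}$ at $(x,x')$ is equivalent to an inclusion of ideals of minors, and we want to deduce the corresponding inclusion for $h$ and $\cM$ separately at $x$ and at $x'$.

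First I would recall the valuative (curve) criterion for integral closure of modules: $h \in \overline{\cM}$ at $x$ if and only if for every analytic arc $\phi\colon (\bC,0)\to (X,x)$, one has $\phi^*(h) \in \overline{\phi^*(\cM)}$ in $\cO_1$, equivalently $h\circ\phi$ lies in the $\cO_1$-module generated by the components of $[\cM]\circ\phi$ with the usual order-of-vanishing estimate. So suppose $h_D \in \overline{\cM_D}$ at $(x,x')$ and let $\phi\colon (\bC,0)\to (X,x)$ be an arbitrary arc. The key idea is to build from $\phi$ an arc into $X\times X$ through $(x,x')$: pick any arc $\psi\colon (\bC,0)\to (X,x')$ (for instance a constant arc if $x'$ is a smooth point, or a general arc otherwise) and consider $\Phi=(\phi,\psi)\colon (\bC,0)\to (X\times X,(x,x'))$. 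Then $\Phi^*(h_D) = (h\circ\phi,\, h\circ\psi)$ and, using the generating set $\cB$ from Proposition \ref{P2}, $\Phi^*(\cM_D)$ is generated over $\cO_1$ by the pairs $(h_j\circ\phi,\,h_j\circ\psi)$ together with pairs of the form $(0,\,(z_i\circ\phi - z_i\circ\psi)(h_j\circ\psi))$. Applying the valuative criterion to the arc $\Phi$ and projecting onto the first coordinate yields $h\circ\phi \in \overline{\phi^*(\cM)}$ in $\cO_1$; since $\phi$ was arbitrary, the curve criterion gives $h\in\overline{\cM}$ at $x$. Projecting onto the second coordinate and letting $\psi$ vary over all arcs through $x'$ gives $h\in\overline{\cM}$ at $x'$ symmetrically.

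There is a subtlety I would need to handle carefully: integral closure is tested against \emph{all} arcs into $X\times X$ through $(x,x')$, not just split arcs of the form $(\phi,\psi)$, so the hypothesis $h_D\in\overline{\cM_D}$ is in principle stronger than what the split arcs alone detect — but that is fine, since we only need to \emph{use} the hypothesis on the split arcs to extract the two conclusions. Conversely, I must make sure that the projection of an integral-dependence relation for $\Phi^*(h_D)$ over $\Phi^*(\cM_D)$ really does descend to an integral-dependence relation for $h\circ\phi$ over $\phi^*(\cM)$; this is immediate because the first-coordinate projection $\cO_{X\times X}^{2p}\to\cO_X^p$ (composed with $\pi_1$) is a module homomorphism carrying $\cM_D$ into $\cM$ and $h_D$ to $h$, and integral closure is preserved by module homomorphisms. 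Alternatively, and perhaps more transparently, I would phrase the whole argument without arcs: the composition $\cO_{X\times X}^{2p}\xrightarrow{\text{pr}_1}(\pi_1)^*\cO_X^p$, i.e. restriction to the slice $X\times\{x'\}$ followed by identifying that slice with $X$, sends $\cM_D$ onto $\cM$ (by inspection of the generators $\cB''$, since restricting $(z_ih_j)_D$ and $(h_j)_D$ to $X\times\{x'\}$ gives multiples of $h_j$) and $h_D$ to $h$; applying the general fact that integral closure commutes with such restrictions/pullbacks then gives $h\in\overline{\cM}$ at $x$, and symmetrically at $x'$.

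The main obstacle I anticipate is bookkeeping the restriction map on generators: one must verify that restricting the full generating set of $\cM_D$ (say $\cB''$ from Proposition \ref{P2}) to the slice $X\times\{x'\}$ lands inside $\cM$ and in fact generates it up to the relevant integral-closure equivalence — the extra generators $(z_ih_j)_D$ restrict to $(z_i\circ\pi_1 - z_i(x'))h_j$ on the slice, which are elements of $\cM$, so they cause no trouble, but this needs to be stated. The other point requiring a line of justification is the standard lemma that integral closure of modules is compatible with analytic pullback (or restriction to a subspace through the relevant point); this is in \cite{G3}, and once invoked the proof is short. I would therefore structure the write-up as: (1) recall the pullback-compatibility of integral closure of modules; (2) identify the restriction-to-slice maps $X\cong X\times\{x'\}\hookrightarrow X\times X$ and $X\cong \{x\}\times X\hookrightarrow X\times X$; (3) check on generators that these carry $\cM_D$ into $\cM$ and $h_D$ to $h$; (4) conclude.
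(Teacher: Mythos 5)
Your argument is correct, and either of the two phrasings you give (split arcs, or restriction to the slice $X\times\{x'\}$ followed by projection to the first $p$ coordinates) establishes the statement; the arc version is the more direct one and is the natural proof of this result. Two small slips are worth flagging, neither of which affects the conclusion. First, the parenthetical suggesting that a constant arc is only available when $x'$ is smooth is mistaken: the constant map $t\mapsto x'$ is an analytic arc $(\bC,0)\to(X,x')$ at any point, so you may always take $\psi=\mathrm{const}_{x'}$; moreover the choice of $\psi$ is irrelevant, since after projecting to the first $p$ coordinates only $\phi$ survives. Second, when you check the generators of $\cB''$ on the slice $X\times\{x'\}$, the first-coordinate projection of $(z_ih_j)_D$ is $z_ih_j$, not $(z_i\circ\pi_1 - z_i(x'))h_j$; you appear to have mixed this with the generators of $\cB$, whose extra elements $(0,(z_i\circ\pi_1-z_i\circ\pi_2)(h_j\circ\pi_2))$ project to $0$. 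In either case the image lies in $\cM$ and the set $\{h_j\}$ already generates $\cM$, so the map $\cM_D\to\cM$ you need is indeed surjective with $h_D\mapsto h$, and the standard compatibility of integral closure with pullback along the closed immersion $X\cong X\times\{x'\}\hookrightarrow X\times X$ and with the $\cO$-linear projection $\cO^{2p}\to\cO^p$ finishes the argument. Finally, the opening sentence announcing a reduction to ideals of minors via Proposition \ref{proposition G} is a red herring: the proof you actually give never uses minors, and does not need to.
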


	In the following theorem, we calculate the generic rank of the double of a module.
	
	\begin{theorem}[\cite{SG}, Proposition 2.5]\label{T2.9}
		Let $(X,x)$ be an irreducible analytic complex germ of dimension $d\ge 1$, and $M\subseteq\cO_{X,x}^p$ an $\cO_{X,x}$-submodule of generic rank $k$. Then $M_D$ has generic rank $2k$. 
	\end{theorem}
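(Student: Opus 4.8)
The plan is to realize $M_D$ via an explicit matrix of generators and to compute the rank of that matrix at a general point of $X\times X$. Fix a generating set $h_1,\dots,h_r$ of $M$ and let $[M]$ be the $p\times r$ matrix with columns $h_1,\dots,h_r$. By Proposition \ref{P2}(a), $M_D$ is generated by the columns of the $2p\times(r+nr)$ matrix $[M_D]$ whose columns are the doubles $(h_j)_D=(h_j\circ\pi_1,\,h_j\circ\pi_2)$ together with the vectors $v_{ij}:=(0_{\cO_{X\times X}^p},\,(z_i\circ\pi_1-z_i\circ\pi_2)(h_j\circ\pi_2))$, for $1\le i\le n$, $1\le j\le r$. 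Since $(X,x)$ is reduced and irreducible, the germ $(X\times X,(x,x))$ is again reduced and irreducible of dimension $2d$; as $d\ge 1$, the diagonal $\Delta(X)$ is a proper analytic subset, hence nowhere dense. The generic rank of a module on such a germ equals the rank of a matrix of generators at a general point, so it suffices to evaluate $\mathrm{rank}\,[M_D]$ on a dense open subset of $X\times X$.

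Because $M$ has generic rank $k$, every $(k+1)\times(k+1)$ minor of $[M]$ vanishes identically on $X$; hence $\mathrm{rank}\,[M](y)\le k$ for all $y\in X$, with equality on a dense open set $U\subseteq X$. Put $V:=\{(y,y')\in X\times X\mid y,y'\in U\}\setminus\Delta(X)$, a dense open subset of $X\times X$, and fix $(y,y')\in V$; let $W\subseteq\bC^p\times\bC^p$ be the column span of $[M_D](y,y')$. Every column of $[M_D](y,y')$ lies in $\mathrm{span}\,[M](y)\times\mathrm{span}\,[M](y')$, so $\dim W\le\mathrm{rank}\,[M](y)+\mathrm{rank}\,[M](y')=2k$; note this bound in fact holds at every point of $X\times X$. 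For the reverse inequality on $V$: since $y\neq y'$ in $\bC^n$, there is an index $i_0$ with $z_{i_0}(y)\neq z_{i_0}(y')$, so the vectors $v_{i_0 j}(y,y')=(0,\,(z_{i_0}(y)-z_{i_0}(y'))\,h_j(y'))$, $1\le j\le r$, span $\{0\}\times\mathrm{span}\,[M](y')$, of dimension $k$; modulo this subspace the columns $(h_j(y),h_j(y'))$ reduce to $(h_j(y),0)$, which span $\mathrm{span}\,[M](y)\times\{0\}$, again of dimension $k$. Hence $\dim W\ge k+k=2k$, so $\dim W=2k$.

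Thus $\mathrm{rank}\,[M_D]=2k$ on the dense open set $V$, while $\mathrm{rank}\,[M_D]\le 2k$ everywhere; since $X\times X$ is irreducible, the generic rank of $M_D$ --- the maximal value of $\mathrm{rank}\,[M_D]$, attained on a dense open set --- therefore equals $2k$. I expect the only delicate point to be the bookkeeping in the second paragraph: verifying that reduction modulo $\{0\}\times\mathrm{span}\,[M](y')$ recovers a full copy of $\mathrm{span}\,[M](y)$ from the doubled generators, so that the two halves each contribute exactly $k$; the ambient containment $W\subseteq\mathrm{span}\,[M](y)\times\mathrm{span}\,[M](y')$ and the upper bound are then automatic. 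The background facts used --- that a product of reduced irreducible complex analytic germs over $\bC$ is reduced and irreducible, and that $\Delta(X)$ is nowhere dense once $d\ge1$ --- are exactly what make ``the'' generic rank on $X\times X$ well defined and detectable on $V$.
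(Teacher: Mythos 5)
The paper does not give a proof of this statement: it is quoted verbatim from \cite{SG}, Proposition 2.5, as a background result, so there is nothing in the present manuscript to compare against. On its own terms your argument is correct and complete. You realize $M_D$ via the generating set of Proposition~\ref{P2}(a), use that $X\times X$ is irreducible and that $\Delta(X)$ is nowhere dense (since $d\ge1$), restrict to a dense open $V=(U\times U)\setminus\Delta(X)$ where $\mathrm{rank}\,[M]=k$ at both coordinates, and then show at a point $(y,y')\in V$ that the vectors $v_{i_0 j}(y,y')$ with $z_{i_0}(y)\neq z_{i_0}(y')$ span $\{0\}\times\mathrm{span}\,[M](y')$ while the doubled generators $(h_j)_D$ contribute an independent copy of $\mathrm{span}\,[M](y)\times\{0\}$ modulo the first, giving rank exactly $2k$ there and $\le 2k$ everywhere. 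The one piece of phrasing to tidy is the clause ``so $\dim W\le\mathrm{rank}\,[M](y)+\mathrm{rank}\,[M](y')=2k$; note this bound in fact holds at every point'': the inequality $\dim W\le\mathrm{rank}\,[M](y)+\mathrm{rank}\,[M](y')\le 2k$ holds at every point, but the middle equality with $2k$ holds only when $y,y'\in U$; the conclusion $\dim W\le 2k$ everywhere is still what you need, so this is a wording issue, not a gap.
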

	
	\begin{corollary}[\cite{SG}, Corollary 2.6]\label{C2.10}
		Let $\{V_i\}$ be the irreducible components of $(X,x)$. For each $i$, if $\cM$ has generic rank $k_i$ on $V_i$ then $\cM_D$ has generic rank $2k_i$ on $V_i\times V_i$. In particular, if $\cM$ has generic rank $k$ on each irreducible component of $X$, then $\cM_D$ has generic rank $2k$ on each irreducible component of $X\times X$.
	\end{corollary}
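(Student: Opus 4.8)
The plan is to reduce the statement to Theorem~\ref{T2.9} on the ``diagonal'' components of $X\times X$, and to dispose of the remaining components by a direct local computation with the generators of $\cM_D$ furnished by Proposition~\ref{P2}. First I would recall that, for a complex analytic germ, the irreducible components of $X\times X$ are exactly the products $V_i\times V_j$; hence proving the ``in particular'' clause amounts to computing the generic rank of $\cM_D$ at a generic smooth point of each $V_i\times V_j$.

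\textit{Diagonal components $V_i\times V_i$.} I would observe that the module induced by $\cM_D$ on $V_i\times V_i$ is precisely the double $(\cM_i)_D$ of the module $\cM_i\subseteq\cO_{V_i,x}^p$ induced by $\cM$ on $V_i$, taken now with respect to the projections $V_i\times V_i\to V_i$: this is immediate on generators, since $h_D=(h\circ\pi_1,h\circ\pi_2)$ restricts to the double of the image of $h$ in $\cO_{V_i,x}^p$. As $V_i$ is irreducible and $\cM_i$ has generic rank $k_i$ on it, Theorem~\ref{T2.9} gives that $(\cM_i)_D$ has generic rank $2k_i$; since the generic rank of a module on an irreducible analytic set germ is read off at a generic smooth point, this is exactly the generic rank of $\cM_D$ on $V_i\times V_i$, proving the first assertion.

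\textit{Off-diagonal components $V_i\times V_j$, $i\neq j$ (needed for the ``in particular'' clause, where all $k_i$ equal a common value $k$).} A generic point $(x,x')\in V_i\times V_j$ has $x\neq x'$, so some coordinate difference, say $z_1\circ\pi_1-z_1\circ\pi_2$, is a unit there. Combining Proposition~\ref{P2}(c) with Lemma~\ref{L1}(a) gives $(0,(z_1\circ\pi_1-z_1\circ\pi_2)(h_j\circ\pi_2))=(z_1\circ\pi_1)(h_j)_D-(z_1h_j)_D\in\cM_D$, and dividing by the unit shows $(0,h_j\circ\pi_2)\in\cM_D$ near $(x,x')$, whence also $(h_j\circ\pi_1,0)=(h_j)_D-(0,h_j\circ\pi_2)\in\cM_D$. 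Running over $j$, I conclude that near $(x,x')$ one has $\cM_D=\pi_1^*\cM\oplus\pi_2^*\cM$ inside $\cO_{X\times X}^{2p}$, with $\pi_1^*\cM$ occupying the first $p$ coordinates and $\pi_2^*\cM$ the last $p$. Since pulling back along $\pi_\ell$ does not change the generic rank (it is $k_i$ for $\ell=1$ and $k_j$ for $\ell=2$) and the two summands lie in complementary copies of $\cO_{X\times X}^p$, the generic rank of $\cM_D$ on $V_i\times V_j$ equals $k_i+k_j=2k$. Combining the two cases proves the corollary.

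I expect the main obstacle to be the off-diagonal case: one must check both that $\cM_D$ genuinely splits as $\pi_1^*\cM\oplus\pi_2^*\cM$ away from the diagonal (the inclusion ``$\supseteq$'' being the content of the computation above, and ``$\subseteq$'' following from $h_D=(h\circ\pi_1,0)+(0,h\circ\pi_2)$ together with Proposition~\ref{P2}), and that this pullback leaves the generic rank on $V_i\times V_j$ unchanged. By contrast, the diagonal components come essentially for free from Theorem~\ref{T2.9} once the identification $\cM_D|_{V_i\times V_i}=(\cM_i)_D$ is made.
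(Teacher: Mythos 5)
The paper records this statement as a citation to \cite{SG} (Corollary~2.6) without reproducing the proof, so there is no in-text argument to compare against; I will just assess the proposal on its own. Your derivation is correct and is the natural one given the tools the paper makes available: you identify the irreducible components of $X\times X$ as the products $V_i\times V_j$, reduce the diagonal blocks $V_i\times V_i$ to Theorem~\ref{T2.9} via the (correct) observation that $\cM_D$ restricted to $V_i\times V_i$ is exactly $(\cM_i)_D$, and handle the off-diagonal blocks by showing $\cM_D$ splits as $\pi_1^*\cM\oplus\pi_2^*\cM$ at a generic point $(x,x')$ with $x\neq x'$ — which is essentially Proposition~\ref{P2.13}, and your computation with Lemma~\ref{L1}(a) and Proposition~\ref{P2}(c) reproduces it cleanly. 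One small remark: for the off-diagonal case you only need some coordinate difference $z_\ell\circ\pi_1-z_\ell\circ\pi_2$ to be a unit near $(x,x')$, not necessarily $z_1$; this is harmless since $x\neq x'$ guarantees such an $\ell$ exists, but it is worth phrasing that way. Another: Theorem~\ref{T2.9} is stated for $\dim V_i\geq 1$, so strictly speaking you should note the degenerate zero-dimensional case separately, though it is trivial.
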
 
	
	We conclude this section by recalling an important result that states that the double of a sheaf of modules $\mathcal{M}$ contains all the information at $(x, x')$ that the stalks of $\mathcal{M}$ have at $x$ and $x'$, provided that $x\neq x'$.
	
	\begin{proposition}[\cite{SG}, Proposition 2.11]\label{P2.13}
		Let $\cM\subseteq\cO_X^p$ be a sheaf of submodules. Consider 
		$(x,x')\in X\times X$ with $x\neq x'$. Then:
		$$\cM_D=(\cM_x\circ\pi_1)\oplus (\cM_{x'}\circ\pi_2)$$ at $(x,x')$.
	\end{proposition}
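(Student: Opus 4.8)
The plan is to derive everything from the explicit generating set of $\cM_D$ given by Proposition~\ref{P2}, together with one elementary observation. Since $x\neq x'$, there is an index $i_0\in\{1,\dots,n\}$ with $z_{i_0}(x)\neq z_{i_0}(x')$, so the function $u:=z_{i_0}\circ\pi_1-z_{i_0}\circ\pi_2$ does not vanish at $(x,x')$ and is therefore a unit in the local ring $\cO_{X\times X,(x,x')}$. First I would fix global sections $h_1,\dots,h_r$ generating $\cM$ (they exist by the standing hypothesis), so that they generate $\cM_x$ over $\cO_{X,x}$ and $\cM_{x'}$ over $\cO_{X,x'}$; then by Proposition~\ref{P2}(a), at $(x,x')$ the module $\cM_D$ is generated over $\cO_{X\times X,(x,x')}$ by
\[
\cB=\{(h_j)_D\}_{j=1}^r\cup\{(0,\,(z_i\circ\pi_1-z_i\circ\pi_2)(h_j\circ\pi_2))\}_{1\le j\le r,\,1\le i\le n}.
\]
Here I read $\cM_x\circ\pi_1$ (resp. $\cM_{x'}\circ\pi_2$) as the $\cO_{X\times X,(x,x')}$-submodule of $\cO_{X\times X,(x,x')}^{2p}$ supported on the first (resp. last) $p$ coordinates and generated by the germs $h_j\circ\pi_1$ (resp. $h_j\circ\pi_2$); since these two submodules live in complementary coordinate blocks, their sum is automatically direct.

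For the inclusion ``$\subseteq$'' at $(x,x')$ it suffices to check that each element of the generating set $\cB$ lies in $(\cM_x\circ\pi_1)\oplus(\cM_{x'}\circ\pi_2)$, which is itself an $\cO_{X\times X,(x,x')}$-module. This is immediate: $(h_j)_D=(h_j\circ\pi_1,0)+(0,h_j\circ\pi_2)$ splits as a sum of an element of $\cM_x\circ\pi_1$ and an element of $\cM_{x'}\circ\pi_2$, while $(0,(z_i\circ\pi_1-z_i\circ\pi_2)(h_j\circ\pi_2))$ lies in $0\oplus(\cM_{x'}\circ\pi_2)$ because $z_i\circ\pi_1-z_i\circ\pi_2\in\cO_{X\times X,(x,x')}$ and $\cM_{x'}\circ\pi_2$ is stable under multiplication by that ring.

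For the reverse inclusion I would use the unit $u$ to recover the two summands separately. Taking $i=i_0$ in $\cB$ shows $(0,u\,(h_j\circ\pi_2))\in\cM_D$ at $(x,x')$, and multiplying by $u^{-1}\in\cO_{X\times X,(x,x')}$ gives $(0,h_j\circ\pi_2)\in\cM_D$ for every $j$; since $\cM_D$ is an $\cO_{X\times X,(x,x')}$-module and the germs $h_j\circ\pi_2$ generate $\cM_{x'}\circ\pi_2$, this yields $0\oplus(\cM_{x'}\circ\pi_2)\subseteq\cM_D$. Then, for each $j$, $(h_j\circ\pi_1,0)=(h_j)_D-(0,h_j\circ\pi_2)\in\cM_D$, so likewise $(\cM_x\circ\pi_1)\oplus 0\subseteq\cM_D$; adding the two inclusions gives $(\cM_x\circ\pi_1)\oplus(\cM_{x'}\circ\pi_2)\subseteq\cM_D$ at $(x,x')$, and combined with the previous paragraph this proves the asserted equality.

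I do not expect a genuine obstacle: the whole content is the observation that $x\neq x'$ forces some $z_{i_0}\circ\pi_1-z_{i_0}\circ\pi_2$ to be a unit at $(x,x')$, after which the explicit generators of Proposition~\ref{P2} carry the argument. The only points that need a little care are the bookkeeping interpretation of $\cM_x\circ\pi_1$ and $\cM_{x'}\circ\pi_2$ as honest $\cO_{X\times X,(x,x')}$-modules (so that the direct-sum statement is an identity of modules, not merely of sets) and the remark that a single finite family of global generators of $\cM$ simultaneously generates both stalks $\cM_x$ and $\cM_{x'}$ — which is precisely the standing assumption that $\cM$ is finitely generated by global sections.
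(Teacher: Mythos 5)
Your argument is correct, and the paper itself gives no proof of this statement — it is quoted verbatim from \cite{SG} (Proposition 2.11) as a background result, so there is nothing in the present text to compare against. The route you take is the natural one and, given how Proposition~\ref{P2} is stated, presumably close to the original: the single observation doing all the work is that $x\neq x'$ forces some $u=z_{i_0}\circ\pi_1-z_{i_0}\circ\pi_2$ to be a unit in $\cO_{X\times X,(x,x')}$, which lets you ``divide'' the generator $(0,u\,(h_j\circ\pi_2))$ to isolate $(0,h_j\circ\pi_2)$ and hence $(h_j\circ\pi_1,0)=(h_j)_D-(0,h_j\circ\pi_2)$; the forward inclusion is a trivial check on the generators of Proposition~\ref{P2}(a). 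Two small points worth keeping explicit, as you already flag: you are relying on the standing hypothesis that $\cM$ is finitely generated by global sections (so a single $\{h_1,\dots,h_r\}$ generates both stalks $\cM_x$ and $\cM_{x'}$), and you are fixing the reading of $\cM_x\circ\pi_1$ and $\cM_{x'}\circ\pi_2$ as the $\cO_{X\times X,(x,x')}$-submodules of $\cO_{X\times X,(x,x')}^{2p}$ sitting in the first and last $p$ coordinate blocks respectively, so that the sum is automatically internal and direct. With those conventions in place the proof is complete and I see no gap.
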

	
	Proposition \ref{P2.13} offers further justification for the concept of the double: in order to manage the Lipschitz behavior of pairs of tangent planes at two distinct points $x$ and $x'$ in a family $\mathcal{X}$, it is helpful to incorporate the module that determines the tangent hyperplanes at each point into the construction. Moreover, this proposition demonstrates that $JM(\cX)_D$ at $(x,x')$ contains both $JM(\cX)_x$ and $JM(\cX)_{x'}$.

	\section{The Lipschitz saturations of modules}\label{sec3}
	
	We aim to extend the concept of Lipschitz saturation, which was originally defined for ideals in \cite{G2}, to modules. Inspired by the equivalent characterizations of Lipschitz saturation in the case of ideals, we introduce analogous versions of Lipschitz saturation for modules and investigate their interrelations. The following definition is primarily motivated by Theorem 2.3 in \cite{G2}.
	
	Let $X\subseteq \bC^n$ be complex analytic variety, $x\in X$ and let $M$ be an $\cO_{X,x}$-submodule of $\cO_{X,x}^p$.
	
	\begin{definition}
		The {\bf 1-Lipschitz saturation of $M$} is denoted by $M_{S_1}$, and is defined by $$M_{S_1}:=\{h\in\cO_{X,x}^{p}\mid h_D\in\overline{M_D}\}.$$
	\end{definition}
	
	Next, we demonstrate that the first Lipschitz saturation satisfies expected properties related to integral closure.
	
	\begin{proposition}\label{P15} Let $M$ be an $\cO_{X,x}$-submodule of $\cO_{X,x}^p$.
		\begin{enumerate}
			\item [a)] $M_{S_1}$ is an $\cO_{X,x}$-submodule of $\cO_{X,x}^p$;
			\item [b)] $M\subseteq M_{S_1} \subseteq \overline{M}$. In particular, $M$ is a reduction of $M_{S_1}$ and $e(M,M_{S_1})=0$.
		\end{enumerate}
	\end{proposition}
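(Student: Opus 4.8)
The plan is to deduce everything from the corresponding facts about the double and about integral closure of modules, which are already available in the excerpt. For part (a), I would show $M_{S_1}$ is closed under addition and under multiplication by elements of $\cO_{X,x}$. Given $g,h\in M_{S_1}$, by Lemma \ref{L1}(d) we have $(g+h)_D=g_D+h_D$, and since $\overline{M_D}$ is a submodule of $\cO_{X\times X,(x,x)}^{2p}$ containing $g_D$ and $h_D$, it contains their sum; hence $g+h\in M_{S_1}$. For $\alpha\in\cO_{X,x}$ and $h\in M_{S_1}$, I would use Lemma \ref{L1}(a): $(\alpha h)_D=-(0,(\alpha\circ\pi_1-\alpha\circ\pi_2)(h\circ\pi_2))+(\alpha\circ\pi_1)h_D$. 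The second summand lies in $\overline{M_D}$ because $h_D\in\overline{M_D}$ and $\overline{M_D}$ is an $\cO_{X\times X,(x,x)}$-module. For the first summand, one wants $(0,(\alpha\circ\pi_1-\alpha\circ\pi_2)(h\circ\pi_2))\in\overline{M_D}$; note Lemma \ref{L1}(b) gives this membership for $h\in M$, but here $h$ is only in $M_{S_1}\subseteq\overline{M}$, so a small extra argument is needed — this is the one genuinely delicate point, discussed below.

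For part (b), the inclusion $M\subseteq M_{S_1}$ is immediate: if $h\in M$ then $h_D\in M_D\subseteq\overline{M_D}$, so $h\in M_{S_1}$. The inclusion $M_{S_1}\subseteq\overline{M}$ is exactly Proposition \ref{P12} applied with $x'=x$: if $h_D\in\overline{M_D}$ at $(x,x)$ then $h\in\overline{M}$ at $x$. Once $M\subseteq M_{S_1}\subseteq\overline{M}$ is established, the remaining assertions are formal: $M$ is a reduction of $\overline{M}$, hence of any intermediate submodule, so $M$ is a reduction of $M_{S_1}$; and the multiplicity pair $e(M,M_{S_1})$ vanishes precisely because $M$ is a reduction of $M_{S_1}$ (equivalently, $M$ and $M_{S_1}$ have the same integral closure, so the same Buchsbaum–Rim multiplicity), which by the Rees-type criterion for modules forces $e(M,M_{S_1})=0$.

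The main obstacle is the closure-under-scalars step in (a), specifically showing $(0,(\alpha\circ\pi_1-\alpha\circ\pi_2)(h\circ\pi_2))\in\overline{M_D}$ when $h\in\overline{M}$ rather than $h\in M$. I expect to handle this by the following reduction: by Lemma \ref{L1}(c), $\alpha\circ\pi_1-\alpha\circ\pi_2$ lies in the ideal $I(\Delta(X))$ generated by the $w_i:=z_i\circ\pi_1-z_i\circ\pi_2$, so it suffices to treat the generators, i.e.\ to show $(0,w_i(h\circ\pi_2))\in\overline{M_D}$ for each $i$. Since $h\in\overline{M}$ at $x$, there is an analytic path germ along which $h$ is integrally dependent on $M$; pulling back along $\pi_2$ and using that $\overline{M_D}$ is a module containing $(0,w_i(h'\circ\pi_2))$ for all $h'\in M$ (Lemma \ref{L1}(b), with the generating form from Proposition \ref{P2}(a)), one obtains the claimed membership by the valuative criterion for integral closure of modules. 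Alternatively — and this is probably cleaner — I would first prove (b), and then observe that $\overline{M_D}=\overline{(\overline{M})_D}$: indeed $M_D\subseteq(\overline M)_D$ gives one inclusion, and for the reverse, each generator of $(\overline M)_D$ of the form $h_D$ with $h\in\overline M$ satisfies $h_D\in\overline{M_D}$ by applying Proposition \ref{P12}'s converse direction together with the description of $\overline{M_D}$ via its generic rank $2k$ on each component (Corollary \ref{C2.10}) and Proposition \ref{proposition G}. With $\overline{M_D}=\overline{(\overline M)_D}$ in hand, closure of $M_{S_1}$ under scalars follows from Lemma \ref{L1}(a),(b) applied to the module $\overline M$ in place of $M$, since every $h\in M_{S_1}$ lies in $\overline M$. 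Either route reduces the problem to facts already in the excerpt; I would present the second.
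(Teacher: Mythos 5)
Your structure mirrors the paper's: part (b) is the easy bookkeeping (in one direction $h_D\in M_D\subseteq\overline{M_D}$, in the other Proposition~\ref{P12} with $x'=x$), and part (a) reduces, via Lemma~\ref{L1}(a) and (d), to showing
\[
\bigl(0,(\alpha\circ\pi_1-\alpha\circ\pi_2)(h\circ\pi_2)\bigr)\in\overline{M_D}
\quad\text{for }h\in M_{S_1}.
\]
You correctly identify this as the only delicate point, and your first route is the right one and is essentially what the paper does implicitly: since $h\in\overline{M}$, for \emph{every} curve $\phi=(\phi_1,\phi_2):(\bC,0)\to(X\times X,(x,x))$ one has $h\circ\phi_2\in\phi_2^*(M)$, so writing $h\circ\phi_2=\sum_j\beta_j(g_j\circ\phi_2)$ with $g_j\in M$ gives
\[
\phi^*\bigl(0,(\alpha\circ\pi_1-\alpha\circ\pi_2)(h\circ\pi_2)\bigr)
=\sum_j\beta_j\,\phi^*\bigl(0,(\alpha\circ\pi_1-\alpha\circ\pi_2)(g_j\circ\pi_2)\bigr)\in\phi^*(M_D),
\]
using Lemma~\ref{L1}(b) for each $g_j$; the curve criterion then finishes it. (Phrase it with ``for every curve,'' not ``there is an analytic path germ'' --- integral dependence is a universal, not an existential, condition on test curves.)

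Your second route, which you say you would actually present, contains a genuine error. The claim $\overline{M_D}=\overline{(\overline M)_D}$ requires showing $h_D\in\overline{M_D}$ for every $h\in\overline M$, which is precisely the assertion $\overline{M}\subseteq M_{S_1}$, i.e.\ $M_{S_1}=\overline{M}$. This is false in general: already for an ideal $I$ one has $I_S\subsetneq\overline I$ typically, and Theorem~2.3 of \cite{G2} converts that strict inclusion into an $h\in\overline I$ with $h_D\notin\overline{I_D}$. There is no ``converse direction'' to Proposition~\ref{P12} in the paper, and none exists; if it did, every Lipschitz saturation in this paper would collapse to the integral closure and the whole enterprise (in particular the generic-equivalence theorem in Section~3, which is only \emph{generic}) would be vacuous. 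Invoking Corollary~\ref{C2.10} and Proposition~\ref{proposition G} does not rescue this: those would only reduce the converse of Proposition~\ref{P12} to an inclusion of the form $I_{2k}(h_D,M_D)\subseteq\overline{I_{2k}(M_D)}$, which is exactly what fails in general. Present the first route and discard the second.

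Two minor points. Lemma~\ref{L1}(a) carries a minus sign on the term $\bigl(0,(\alpha\circ\pi_1-\alpha\circ\pi_2)(h\circ\pi_2)\bigr)$; this is harmless since $\overline{M_D}$ is a submodule, but keep the sign straight. And the multiplicity statement at the end of (b) is fine as you have it: $M\subseteq M_{S_1}\subseteq\overline M$ gives $\overline M=\overline{M_{S_1}}$, hence $M$ is a reduction of $M_{S_1}$ and $e(M,M_{S_1})=0$.
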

	
	\begin{proof}
		Let $x\in X$ be an arbitrary point.
		
		(a) Let $\alpha\in\cO_{X,x}$ and $h,h'\in M_{S_1}$. Since $h_D\in\overline{\cM_D}$ then by Proposition \ref{P12} (a) we have that $h\in\overline{\cM}$. Thus, $$(0,(\alpha\circ\pi_1-\alpha\circ\pi_2)(h\circ\pi_2))\in\overline{M_D}.$$ Hence
		$(\alpha h+h')_D=(\alpha\circ\pi_1)h_D+(0,(\alpha\circ\pi_1-\alpha\circ\pi_2)(h\circ\pi_2))+h'_D\in\overline{M_D}$.
		
		(b) If $h\in M$ then $h_D\in M_D\subseteq\overline{M_D}$, so $h\in M_{S_1}$. Therefore, $M\subseteq M_{S_1}$. Besides, if $h\in M_{S_1}$ then $h_D\in\overline{M_D}$, and by Proposition \ref{P12} (a) we have that $h\in\overline{M}$. Therefore, $M_{S_1}\subseteq\overline{M}$.
	\end{proof}
	
	To define the second Lipschitz saturation, let us establish some notation. 
	
	For each $\psi:X\rightarrow \mbox{Hom}(\bC^p,\bC)$, $\psi=(\psi_1,\hdots,\psi_p)$, $x\in X$ and $h=(h_1,\hdots,h_p)\in\cO_{X,x}^p$, we define $\psi\cdot h\in\cO_{X,x}$ given by $$(\psi\cdot h)(z):=\sum\limits_{i=1}^{p}\psi_i(z)h_i(z).$$ 
	
	For an $\cO_{X,x}$-submodule $M$ of $\cO_{X,x}^p$, we define $\psi\cdot M$ as the ideal of $\cO_{X,x}$ generated $\{\psi\cdot h \mid h\in \cM\}$. 
	In the next lemma, we prove some fundamental properties of the above operation.
	
	\begin{lemma}\label{L16} With the above notation:
		\begin{enumerate}
			\item [a)] $\psi\cdot(\alpha g+h)=\alpha(\psi\cdot g)+(\psi\cdot h)$, $\forall g,h\in \cO_{X,x}^p$ and $\alpha\in\cO_{X,x}$.
			\item [b)] If $M$ is generated by $\{h_1,\hdots,h_r\}$ then $\psi\cdot M$ is generated by $\{\psi\cdot h_1,\hdots,\psi\cdot h_r\}$.
		\end{enumerate}
	\end{lemma}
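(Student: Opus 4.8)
The plan is to prove both parts directly from the definition of the pairing $\psi\cdot h$, which is pointwise bilinear over $\cO_{X,x}$ in the pair $(\psi,h)$.

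For (a), I would simply expand the definition: for $z$ near $x$,
$$\psi\cdot(\alpha g+h)(z)=\sum_{i=1}^{p}\psi_i(z)\big(\alpha(z)g_i(z)+h_i(z)\big)=\alpha(z)\sum_{i=1}^{p}\psi_i(z)g_i(z)+\sum_{i=1}^{p}\psi_i(z)h_i(z),$$
which is exactly $\alpha(\psi\cdot g)(z)+(\psi\cdot h)(z)$. Thus (a) is an immediate consequence of distributivity in $\cO_{X,x}$; in particular the map $h\mapsto\psi\cdot h$ is $\cO_{X,x}$-linear, and by induction $\psi\cdot\big(\sum_{j=1}^{r}\alpha_j h_j\big)=\sum_{j=1}^{r}\alpha_j(\psi\cdot h_j)$ for all $\alpha_j\in\cO_{X,x}$.

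For (b), one inclusion is trivial: since each generator $h_j$ lies in $M$, each $\psi\cdot h_j$ lies in the ideal $\psi\cdot M$, so $J:=(\psi\cdot h_1,\dots,\psi\cdot h_r)\subseteq\psi\cdot M$. For the reverse inclusion, I would use that $\psi\cdot M$ is, by definition, generated by the elements $\psi\cdot h$ with $h\in M$; hence it suffices to show each such $\psi\cdot h$ lies in $J$. Writing $h=\sum_{j=1}^{r}\alpha_j h_j$ with $\alpha_j\in\cO_{X,x}$ and invoking the $\cO_{X,x}$-linearity established in (a), we get $\psi\cdot h=\sum_{j=1}^{r}\alpha_j(\psi\cdot h_j)\in J$. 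Therefore $\psi\cdot M\subseteq J$, and the two ideals coincide.

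There is essentially no obstacle here; the lemma is bookkeeping. The only point meriting a word of care is that the definition of $\psi\cdot M$ uses \emph{all} $h\in M$ as generators, so one must explicitly invoke the linearity from (a) to cut the generating set down to the finitely many module generators $h_1,\dots,h_r$ — this is the standard one-summand-at-a-time induction.
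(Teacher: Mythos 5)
Your proof is correct and follows the same approach as the paper: part (a) by direct expansion of the pointwise definition, and part (b) as a consequence of (a) via linearity. The paper merely states that (b) is a straightforward consequence of (a), whereas you have spelled out the two inclusions explicitly, which is a welcome but not substantive elaboration.
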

	\begin{proof}
		It is easy to see that (b) is a straightforward consequence of (a). Now, write $g=(g_1,\hdots,g_p)$ and $h=(h_1,\hdots,h_p)$. Then, for every $z$ we have $$(\psi\cdot(\alpha g+h))(z)=\sum\limits_{i=1} ^{p}\psi_i(z)(\alpha(z)g_i(z)+h_i(z))$$$$=\alpha(z)\sum\limits_{i=1}^{p}\psi_i(z)g_i(z)+\sum\limits_{i=1}^{p}\psi_i(z)h_i(z)=(\alpha(\psi\cdot g)+(\psi\cdot h))(z).$$
	\end{proof}
	
	Let us fix some notation for the minors of a matrix. Let $k\in\bN$ and let $A$ be a matrix. If $I=(i_1,\hdots,i_k)$ and $J=(j_1,\hdots,j_k)$ are $k$-indexes, $A_{IJ}$ is defined as the $k\times k$ submatrix of $A$ formed by the rows $i_1,\hdots,i_k$ and  columns $j_1,\hdots,j_k$ of $A$. We denote $\cJ_{IJ}(A):=\det(A_{IJ})$.
	
	\begin{lemma}\label{L21}
		Suppose that $M$ has generic rank $k$ in each irreducible component of $X$ at $x$. If $I=(i_1,\hdots,i_k)$ and $J=(j_1,\hdots,j_k)$ are indexes with $j_1=1$ then there exists $\psi: X\rightarrow \mbox{Hom}(\bC^p,\bC)$ such that:
		\begin{enumerate}
			\item [a)] $\psi\cdot h=\cJ_{IJ}(h,M)$, $\forall h\in\cO_{X,x}^p$;
			\item [b)] $\psi\cdot M\subseteq I_k(M)$.
		\end{enumerate}
	\end{lemma}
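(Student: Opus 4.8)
The plan is to construct $\psi$ by hand, using a Laplace (cofactor) expansion of the minor $\cJ_{IJ}(h,M)$ along its first column. Fix generators $h_1,\dots,h_r$ of $M$, let $[M]$ be the $p\times r$ matrix whose columns are $h_1,\dots,h_r$, and for $h\in\cO_{X,x}^p$ write $[(h,M)]$ for the $p\times(r+1)$ matrix obtained by inserting $h$ as a new first column, so that $\cJ_{IJ}(h,M)$ is a $k\times k$ minor of $[(h,M)]$ and $I_k(M)$ is generated by the $k\times k$ minors of $[M]$. Because $j_1=1$, the first column of the submatrix $(h,M)_{IJ}$ is exactly $(h_{i_1},\dots,h_{i_k})^{T}$, and expanding the determinant along that column gives
\[
\cJ_{IJ}(h,M)=\sum_{l=1}^{k}(-1)^{l+1}\,h_{i_l}\,\cJ_{I^{(l)},J'}(M),
\]
where $I^{(l)}=(i_1,\dots,\widehat{i_l},\dots,i_k)$ and $J'=(j_2-1,\dots,j_k-1)$ are genuine row- and column-indexes of $[M]$. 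The essential point is that the coefficients $c_l:=(-1)^{l+1}\cJ_{I^{(l)},J'}(M)$ lie in $\cO_{X,x}$ (indeed they are global sections, being minors of $[M]$ up to sign) and do not depend on $h$.

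I would then simply \emph{define} $\psi=(\psi_1,\dots,\psi_p)\colon X\to\mbox{Hom}(\bC^p,\bC)$ by $\psi_{i_l}:=c_l$ for $l=1,\dots,k$ and $\psi_i:=0$ for $i\notin\{i_1,\dots,i_k\}$. Part (a) is then immediate from the displayed identity: for every $h\in\cO_{X,x}^p$,
\[
\psi\cdot h=\sum_{i=1}^{p}\psi_i h_i=\sum_{l=1}^{k}c_l\,h_{i_l}=\cJ_{IJ}(h,M).
\]

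For part (b) I would invoke Lemma \ref{L16}(b): since $M$ is generated by $h_1,\dots,h_r$, the ideal $\psi\cdot M$ is generated by $\psi\cdot h_1,\dots,\psi\cdot h_r$, and by (a) each of these equals $\cJ_{IJ}(h_m,M)$. The decisive observation is that $h_m$ is itself one of the generators of $M$, hence one of the columns of $[M]$; consequently all $k$ columns of $(h_m,M)_{IJ}$ are columns of $[M]$ restricted to the rows $i_1,\dots,i_k$. Therefore $\cJ_{IJ}(h_m,M)$ is either $0$ (when two of those column indexes coincide) or, up to sign, a $k\times k$ minor of $[M]$; in both cases $\psi\cdot h_m\in I_k(M)$, so $\psi\cdot M\subseteq I_k(M)$.

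The computation is essentially bookkeeping, so the only step needing genuine care is (b): one must make sure that adjoining a \emph{generator} of $M$ as the new first column and then selecting a $k\times k$ submatrix through that column never produces anything outside $I_k(M)$. A minor annoyance is the treatment of degenerate index choices (a repeated column index, which forces the relevant determinant to vanish), handled by taking the corresponding $\psi$-component to be $0$; note that the hypothesis on the generic rank of $M$ is not actually needed for this construction and appears only because $k$ is the rank relevant in the applications.
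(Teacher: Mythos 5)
Your construction of $\psi$ via cofactor expansion of $\cJ_{IJ}(h,M)$ along its first column is exactly the paper's proof of part (a), down to setting $\psi_{i_l}$ equal to the $(l,1)$-cofactor and $\psi_i=0$ off $I$. For part (b) the paper is slightly more abstract: it takes arbitrary $g\in M$, notes $(g,M)=M$ so that $\cJ_{IJ}(g,M)\in I_k((g,M))=I_k(M)$, whereas you reduce (via Lemma~\ref{L16}(b)) to the generators $h_m$ and observe directly that $\cJ_{IJ}(h_m,M)$ is either zero or $\pm$ an honest $k\times k$ minor of $[M]$; both are correct and equally short. Your closing remark that the generic-rank hypothesis is not actually used in this construction is also accurate.
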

	
	\begin{proof}
		Let us fix a matrix of generators of $M$, $[M]=$
		$
		\begin{bmatrix}
			|   &           &   | \\
			g_1 & \ldots & g_r \\
			|   &          &    |
		\end{bmatrix}$. We have $\cJ_{IJ}(h,M)=\det [h,M]_{IJ}$, where $$[h,M]_{IJ}=\begin{bmatrix}
			h_{i_1}      &     g_{i_1,j_2-1}  &   \ldots    &     g_{i_1,j_k-1} \\
			\vdots     &         \vdots        &                &         \vdots        \\
			h_{i_k}      &     g_{i_k,j_2-1}  &   \ldots    &     g_{i_k,j_k-1}
		\end{bmatrix}$$ for all $h=(h_1,\hdots,h_p)$. Let $G_{i_l,j_s-1}$ be the $(l,s)$-cofactor of $[h,M]_{IJ}$, for all $l,s\in\{1,\hdots,k\}$. Notice that the $(l,1)$-cofactors $G_{i_l,0}$ do not depend of $h$. Then,  $$\cJ_{IJ}(h,M)=\det[h,M]_{IJ}=\sum\limits_{l=1}^{k}G_{i_l,0}\cdot h_{i_l},$$\noindent for all $h$. Take $\psi: X\rightarrow \mbox{Hom}(\bC^p,\bC)$ given by $(\psi_1,\hdots,\psi_p)$ where $\psi_{i_l}=G_{i_l,0}$, for all $l\in\{1,\hdots,k\}$, and $\psi_j=0$, for every index $j$ off $I$. Thus, for all $h\in\cO_{X,x}^p$ we get $\cJ_{IJ}(h,M)=\psi_I\cdot h_I=\psi\cdot h$.
		
		Now, let $g\in M$ be arbitrary. Then $(g,M)=M$. By (a) we have $\psi\cdot g=\cJ_{I,J}(g,M)\in I_k((g,M))=I_k(M)$, hence $\psi\cdot M\subseteq I_k(M)$.
	\end{proof}
	
	{\bf Remark:} It is enough work with indexes $I=(i_1,\hdots,i_k)$ and $J=(j_1,\hdots,j_k)$ such that $i_1<\hdots<i_k$ and $j_1<\hdots<j_k$. 
	
	Proposition \ref{proposition G} is a characterization for the integral closure of modules using the integral closure of ideals. In the next result, we obtain a new characterization of this type.
	
	\begin{proposition}\label{P18}
		Let $x\in X$, $h\in\cO_{X,x}^p$ and suppose $M$ has generic rank $k$ on each irreducible component of $X$. Then, \begin{center}$h\in\overline{M}$ if, and only if, $\psi\cdot h\in\overline{\psi\cdot M}$,\end{center}\noindent $\forall \psi:X\rightarrow \mbox{Hom}(\bC^p,\bC)$.
	\end{proposition}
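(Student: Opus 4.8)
The plan is to treat the two implications separately. For the forward implication I would use the Rees-algebra description of integral closure of modules, and for the converse the minor criterion of Proposition \ref{proposition G} combined with Lemma \ref{L21}.

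Assume $h\in\overline M$. By Lemma \ref{L16}(a) the map $h\mapsto\psi\cdot h$ is $\cO_{X,x}$-linear, so it induces a graded $\cO_{X,x}$-algebra homomorphism $\cO_{X,x}[T_1,\ldots,T_p]\to\cO_{X,x}[S]$, $T_i\mapsto\psi_iS$, which carries the Rees algebra of $M$ (the subalgebra generated by the linear forms $\sum_l g_lT_l$, $g\in M$) into the Rees algebra $\bigoplus_{j\ge0}(\psi\cdot M)^jS^j$ of the ideal $\psi\cdot M$. Since $h\in\overline M$ means precisely that the linear form $\sum_l h_lT_l$ is integral over the Rees algebra of $M$, applying this homomorphism shows $(\psi\cdot h)S$ is integral over $\bigoplus_j(\psi\cdot M)^jS^j$; as $(\psi\cdot h)S$ is homogeneous of degree $1$ it satisfies an integral equation with homogeneous coefficients, and dividing through by $S^m$ yields $(\psi\cdot h)^m+c_1(\psi\cdot h)^{m-1}+\cdots+c_m=0$ with $c_j\in(\psi\cdot M)^j$, i.e. $\psi\cdot h\in\overline{\psi\cdot M}$. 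This direction uses no hypothesis on ranks. (One could equivalently argue curve by curve, but the Rees-algebra version is cleaner.)

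For the converse, assume $\psi\cdot h\in\overline{\psi\cdot M}$ for every $\psi$; I would verify the hypothesis of Proposition \ref{proposition G} on each irreducible component $V_i$, where $\cM_i$ denotes the induced submodule of $\cO_{V_i,x}^p$ (of generic rank $k$). First note that the assumption that $M$ has generic rank $k$ on every component means $I_{k+1}(M)=0$ in $\cO_{X,x}$ (a $(k+1)$-minor of a matrix of generators vanishes on each $V_i$, hence on the reduced germ $X$). Now fix $(k+1)$-indices $(I,J)$ with $j_1=1$ and run the cofactor expansion along the first column from the proof of Lemma \ref{L21}: this produces $\psi$ with $\psi\cdot h'=\cJ_{IJ}(h',M)$ for all $h'$, while $\psi\cdot g=\cJ_{IJ}(g,M)$ is, up to sign, a $(k+1)\times(k+1)$ minor of a matrix of generators of $M$, hence lies in $I_{k+1}(M)=0$; thus $\psi\cdot M=0$ and the hypothesis forces $\cJ_{IJ}(h,M)=\psi\cdot h\in\overline{0}=0$. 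Together with the vanishing of all $(k+1)$-minors of $[M]$ this gives $I_{k+1}((h,M))=0$, so the generic rank $k_i$ of $(h,\cM_i)$ equals $k$ on every component. Next, for $k$-indices $(I,J)$ with $j_1=1$, Lemma \ref{L21} supplies $\psi$ with $\psi\cdot h=\cJ_{IJ}(h,M)$ and $\psi\cdot M\subseteq I_k(M)$, so the hypothesis gives $\cJ_{IJ}(h,M)\in\overline{\psi\cdot M}\subseteq\overline{I_k(M)}$; applying the ring map $\cO_{X,x}\to\cO_{V_i,x}$ (which sends integral closures into integral closures) and using $(I_k(M))|_{V_i}=I_k(\cM_i)$ gives $\cJ_{IJ}(h,\cM_i)\in\overline{I_k(\cM_i)}$. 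The remaining $k$-minors of $[h,\cM_i]$ lie in $I_k(\cM_i)$ outright, so $I_k((h,\cM_i))\subseteq\overline{I_k(\cM_i)}$ on every component, and Proposition \ref{proposition G} yields $h\in\overline M$.

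The step I expect to be the main obstacle is, in the converse, ruling out that the generic rank of $(h,\cM_i)$ jumps to $k+1$: the inclusion $I_k((h,\cM_i))\subseteq\overline{I_k(\cM_i)}$ by itself does not suffice to invoke Proposition \ref{proposition G}, and the device above — feeding the cofactor-expansion $\psi$ built from $(k+1)\times(k+1)$ minors into the hypothesis, where $I_{k+1}(M)=0$ collapses $\psi\cdot M$ to zero — is what closes this gap. A secondary point to handle carefully is the descent from $\cO_{X,x}$ to the local rings of the components: that integral closure of ideals commutes with this restriction, and that $\overline{0}=0$ there because $X$ is reduced.
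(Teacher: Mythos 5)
Your proof is correct and follows the same overall strategy as the paper — curve/Rees criterion for the forward implication, Proposition~\ref{proposition G} together with the cofactor-expansion functionals of Lemma~\ref{L21} for the converse — but it differs in two places, one cosmetic and one substantive.

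The cosmetic difference is the forward direction: the paper simply invokes the curve criterion, whereas you run the Rees-algebra argument (sending $T_i\mapsto\psi_i S$ and pushing forward the integral equation). Both are standard and correct; the paper's route is one line shorter.

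The substantive difference is in the converse, and it works in your favor. The paper writes ``it is enough to check that $\cJ_{IJ}(h,M)\in\overline{I_k(M)}$ for all indexes $I,J$'' and then only treats $k\times k$ minors. But Proposition~\ref{proposition G} is stated with $k_i$ equal to the generic rank of $(h,\cM_i)$, not of $\cM_i$, and these two can differ: if the rank jumps to $k+1$, verifying $I_k((h,\cM_i))\subseteq\overline{I_k(\cM_i)}$ is not enough (take $X=\bC$, $M=(t,0)\cO_1$, $h=(0,t)$: here $I_1(h,M)=(t)\subseteq\overline{I_1(M)}$, yet $h\notin\overline{M}$). You close this gap cleanly by running the cofactor expansion for $(k+1)$-indices: the resulting $\psi$ has $\psi\cdot M\subseteq I_{k+1}(M)=0$, so the hypothesis forces $\psi\cdot h=\cJ_{IJ}(h,M)\in\overline{0}=0$ ($\cO_{X,x}$ being reduced), hence $I_{k+1}(h,M)=0$ and the generic rank of $(h,\cM_i)$ really is $k$. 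This is exactly the device Lemma~\ref{L21} is built for, just applied one size up, and it is the step that makes the appeal to Proposition~\ref{proposition G} legitimate. Your secondary remark about restriction to components (integral closure passes to $\cO_{V_i,x}$, and $\overline{0}=0$ there) is also correct and worth keeping, since Proposition~\ref{proposition G} is stated component-by-component. In short: same skeleton as the paper, but your version is the more complete argument.
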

	
	\begin{proof}

		$(\implies)$ It is a straightforward consequence of the curve criterion.
		
		$(\impliedby)$ The proof now use 1.7 and 1.8 of \cite{G3}.
		
		By these results it is enough to check that $\cJ_{IJ}(h,M)\in\overline{I_k(M)}$, for all indexes $I$ and $J$. Write $I=(i_1,\hdots,i_k)$ and $J=(j_1,\hdots,j_k)$. Let $$[M]=\begin{bmatrix}
			|   &           &   | \\
			g_1 & \ldots & g_r \\
			|   &          &    |
		\end{bmatrix}$$\noindent be a matrix of generators of $M$. Write $h=(h_1,\hdots,h_p)$. Then, $[h,M]=$
		$
		\begin{bmatrix}
			|    &  |   &           &   | \\
			h   & g_1 & \ldots & g_r \\
			|   &  |   &          &    |
		\end{bmatrix}$. If $j_1>1$ then $\cJ_{IJ}(h,M)$ is a $k\times k$ minor taken only among the generators of $M$, hence $$\cJ_{IJ}(h,M)\in I_k(M)\subseteq\overline{I_k(M)}.$$ 
		
		Now suppose that $j_1=1$. Thus, Lemma \ref{L21} ensures there exists $\psi:X\rightarrow \mbox{Hom}(\bC^p,\bC)$ such that $\psi\cdot h=\cJ_{I,J}(h,M)$ and $\psi\cdot M\subseteq I_k(M)$. By hypothesis we have $\psi\cdot h\in\overline{\psi\cdot M}$, therefore $$\cJ_{I,J}(h,M)=\psi\cdot h\in\overline{\psi\cdot M}\subseteq\overline{I_k(M)}.$$
	\end{proof}
	
	Let $x\in X$ and let $M$ be an $\cO_{X,x}$-submodule of $\cO_{X,x}^p$.
	
	\begin{definition}
		The {\bf 2-Lipschitz saturation of $M$ } is denoted by $M_{S_2}$, and is defined by $$M_{S_2}:=\{h\in\cO_{X,x}^{p}\mid\psi\cdot h\in(\psi\cdot \cM)_S, \forall \psi:X\rightarrow \textrm{Hom}(\bC^p,\bC)\}.$$
	\end{definition}
	
	Now, we prove that the second Lipschitz saturation satisfies the expected properties related to integral closure.
	
	\begin{proposition}\label{P19} Let $M$ be an $\cO_{X,x}$-submodule of $\cO_{X,x}^p$, $x\in X$.
		\begin{enumerate}
			\item [a)] $M_{S_2}$ is an $\cO_{X,x}$-submodule of $\cO_{X,x}^p$;
			
			\item [b)] $M\subseteq M_{S_2} \subseteq \overline{M}$. In particular, $M$ is a reduction of $M_{S_2}$ and  $e(M,M_{S_2})=0$.
		\end{enumerate}
	\end{proposition}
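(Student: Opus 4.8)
The plan is to run the argument in close parallel with the proof of Proposition~\ref{P15}, the simplification being that here one works directly with ideals and their Lipschitz saturations. The only facts needed beyond the definitions and Lemma~\ref{L16} are: the elementary observation recalled in Section~\ref{sec0} that for any ideal $I\subseteq\cO_{X,x}$ the Lipschitz saturation $I_S$ is again an ideal with $I\subseteq I_S\subseteq\overline I$; and Proposition~\ref{P18}.

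For (a), fix $\alpha\in\cO_{X,x}$ and $h,h'\in M_{S_2}$, and let $\psi\colon X\to\mathrm{Hom}(\bC^p,\bC)$ be arbitrary. By Lemma~\ref{L16}(a), $\psi\cdot(\alpha h+h')=\alpha(\psi\cdot h)+(\psi\cdot h')$; since $\psi\cdot h,\psi\cdot h'\in(\psi\cdot M)_S$ and $(\psi\cdot M)_S$ is an ideal of $\cO_{X,x}$, the right-hand side lies in $(\psi\cdot M)_S$. As $\psi$ was arbitrary and $0\in M_{S_2}$ trivially, $M_{S_2}$ is an $\cO_{X,x}$-submodule of $\cO_{X,x}^p$.

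For (b), if $h\in M$ then for every $\psi$ we have $\psi\cdot h\in\psi\cdot M\subseteq(\psi\cdot M)_S$ (using Lemma~\ref{L16}(b) and $I\subseteq I_S$), so $M\subseteq M_{S_2}$. Conversely, if $h\in M_{S_2}$ then $\psi\cdot h\in(\psi\cdot M)_S\subseteq\overline{\psi\cdot M}$ for all $\psi$, and Proposition~\ref{P18} then gives $h\in\overline M$; hence $M_{S_2}\subseteq\overline M$. The ``in particular'' statement follows exactly as in Proposition~\ref{P15}(b): the chain $M\subseteq M_{S_2}\subseteq\overline M$ forces $\overline{M_{S_2}}=\overline M$, so $M$ is a reduction of $M_{S_2}$ and consequently $e(M,M_{S_2})=0$. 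I do not expect a genuine obstacle here; the one step that needs care is the appeal to Proposition~\ref{P18}, whose hypothesis is that $M$ has the same generic rank $k$ on every irreducible component of $X$ at $x$, so this proposition (and the definition of $M_{S_2}$ itself) is to be read under that standing assumption — otherwise one restricts $M$ to each $\cO_{V_i,x}^p$ and argues component by component. Everything else is a routine unwinding of the definitions together with Lemma~\ref{L16}.
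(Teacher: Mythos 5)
Your proof is correct and follows the same route as the paper's: Lemma~\ref{L16}(a) for the submodule property, the inclusions $\psi\cdot M\subseteq(\psi\cdot M)_S\subseteq\overline{\psi\cdot M}$ for the chain, and Proposition~\ref{P18} for $M_{S_2}\subseteq\overline M$. You are also right to flag that the appeal to Proposition~\ref{P18} silently requires the standing constant-generic-rank hypothesis (or a component-by-component argument), a point the paper's statement of Proposition~\ref{P19} leaves implicit.
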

	
	\begin{proof}
		
		(a) Let $h,h'\in M_{S_2}$ and $\alpha\in\cO_X$, and let $\psi:X\rightarrow \mbox{Hom}(\bC^p,\bC)\}$ arbitrary. Then, $\psi\cdot h,\psi\cdot h'\in(\psi\cdot M)_S$, and by Lemma \ref{L16} (a) we have \begin{center}$\psi\cdot(\alpha h+h')=\alpha(\psi\cdot h)+\psi\cdot h'\in(\psi\cdot M)_S$.\end{center} Therefore, $\alpha h+h'\in M_{S_2}$.
		
		(b) If $h\in M$ then $\psi\cdot h\in\psi\cdot M\subseteq(\psi\cdot M)_S$, $\forall\psi:X\rightarrow \mbox{Hom}(\bC^p,\bC)$. Hence, $h\in M_{S_2}$ and $M\subseteq M_{S_2}$. Now, let $h\in M_{S_2}$. Then, $$\psi\cdot h\in(\psi\cdot M)_S\subseteq\overline{\psi\cdot M},$$\noindent $\forall\psi:X\rightarrow \mbox{Hom}(\bC^p,\bC)$. Thus Proposition \ref{P18} ensures $h\in\overline{M}$.
	\end{proof}
	
	Next, we begin to compare the first and second  Lipschitz saturations.
	
	\begin{proposition}\label{P23}
		Let $x\in X$ and let $M$ be an $\cO_{X,x}$-submodule of $\cO_{X,x}^p$. Then $$M_{S_1}\subseteq M_{S_2}.$$
	\end{proposition}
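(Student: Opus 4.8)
The plan is to show that membership in $M_{S_1}$ forces the defining condition of $M_{S_2}$ to hold for every $\psi$. So fix $h\in M_{S_1}$ and an arbitrary $\psi:X\rightarrow \mathrm{Hom}(\bC^p,\bC)$; I must prove $\psi\cdot h\in(\psi\cdot M)_S$. By Theorem 2.3 of \cite{G2} (the ideal case, quoted early in the excerpt) this is equivalent to $(\psi\cdot h)_D\in\overline{(\psi\cdot M)_D}$, so the target has been converted into an integral-closure statement about the \emph{double of an ideal} in $\cO_{X\times X}^2$. On the other hand, the hypothesis $h\in M_{S_1}$ says precisely $h_D\in\overline{M_D}$, an integral-closure statement about the double of the \emph{module} $M$ in $\cO_{X\times X}^{2p}$. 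The whole proof therefore reduces to comparing these two doubles.

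The key step is to exhibit an $\cO_{X\times X}$-linear map $\Psi_D:\cO_{X\times X}^{2p}\rightarrow \cO_{X\times X}^{2}$ which sends $M_D$ into $(\psi\cdot M)_D$ and sends $h_D$ to $(\psi\cdot h)_D$; then, since integral closure of modules is preserved by module homomorphisms (the curve criterion: if $h_D\in\overline{M_D}$ and $\Psi_D$ is linear, then $\Psi_D(h_D)\in\overline{\Psi_D(M_D)}\subseteq\overline{(\psi\cdot M)_D}$), we are done. The natural candidate is the ``double'' of the map $g\mapsto\psi\cdot g$: on an element $(a,b)\in\cO_{X\times X}^{p}\oplus\cO_{X\times X}^{p}$ set $\Psi_D(a,b):=\big((\psi\circ\pi_1)\cdot a,\ (\psi\circ\pi_2)\cdot b\big)$, where $(\psi\circ\pi_j)\cdot(\,\cdot\,)$ denotes the componentwise pairing pulled back along $\pi_j$. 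With this definition $\Psi_D(h_D)=\Psi_D(h\circ\pi_1,h\circ\pi_2)=\big((\psi\circ\pi_1)\cdot(h\circ\pi_1),(\psi\circ\pi_2)\cdot(h\circ\pi_2)\big)=\big((\psi\cdot h)\circ\pi_1,(\psi\cdot h)\circ\pi_2\big)=(\psi\cdot h)_D$, and likewise $\Psi_D$ carries each generator $(h_j)_D$ of $M_D$ to $(\psi\cdot h_j)_D$, which by Lemma \ref{L16}(b) generate $(\psi\cdot M)_D$. One must also check that $\Psi_D$ kills, or at least maps into $(\psi\cdot M)_D$, the extra generators of $M_D$ coming from Proposition \ref{P2} (the terms $(0,(z_i\circ\pi_1-z_i\circ\pi_2)(h_j\circ\pi_2))$); but applying $\Psi_D$ to such a term gives $(0,(z_i\circ\pi_1-z_i\circ\pi_2)\,((\psi\circ\pi_2)\cdot(h_j\circ\pi_2)))=(0,(z_i\circ\pi_1-z_i\circ\pi_2)((\psi\cdot h_j)\circ\pi_2))$, which by Lemma \ref{L1}(b) lies in $(\psi\cdot M)_D$. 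Hence $\Psi_D(M_D)\subseteq(\psi\cdot M)_D$.

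Putting the pieces together: from $h\in M_{S_1}$ we get $h_D\in\overline{M_D}$; applying the linear map $\Psi_D$ and the fact that integral closure is preserved under module maps gives $(\psi\cdot h)_D=\Psi_D(h_D)\in\overline{\Psi_D(M_D)}\subseteq\overline{(\psi\cdot M)_D}$; by Theorem 2.3 of \cite{G2} this means $\psi\cdot h\in(\psi\cdot M)_S$; and since $\psi$ was arbitrary, $h\in M_{S_2}$.

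The main obstacle is making precise and correct the assertion that $\Psi_D$ as defined is genuinely $\cO_{X\times X}$-linear and genuinely lands in $\cO_{X\times X}^2$ — i.e.\ that pulling $\psi$ back along $\pi_1$ and $\pi_2$ and pairing gives a well-defined morphism of $\cO_{X\times X}$-modules — together with the bookkeeping that $\Psi_D$ sends the \emph{chosen} generating set of $M_D$ (from Proposition \ref{P2}) into $(\psi\cdot M)_D$, so that $\Psi_D(M_D)\subseteq(\psi\cdot M)_D$ holds on the nose rather than merely up to integral closure. Once the map is in place the rest is a short application of the curve criterion and Theorem 2.3 of \cite{G2}.
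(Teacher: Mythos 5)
Your argument is correct, and it rests on the same engine as the paper's proof — the curve criterion for integral closure applied to $h_D\in\overline{M_D}$, followed by Theorem 2.3 of \cite{G2} to convert $(\psi\cdot h)_D\in\overline{(\psi\cdot M)_D}$ into $\psi\cdot h\in(\psi\cdot M)_S$ — but it packages the curve computation differently. The paper takes an arbitrary curve $\phi=(\phi_1,\phi_2)$ into $X\times X$, writes $h_D\circ\phi=\sum_j\alpha_j\,\phi^*((g_j)_D)$, extracts the coordinate identities $h_i\circ\phi_\ell=\sum_j\alpha_j(g_{ij}\circ\phi_\ell)$, and verifies by direct substitution that $(\psi\cdot h)_D\circ\phi=\sum_j\alpha_j\bigl((\psi\cdot g_j)_D\circ\phi\bigr)\in\phi^*\bigl((\psi\cdot M)_D\bigr)$. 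You instead build the block-diagonal $\cO_{X\times X}$-linear map $\Psi_D$, check on the generating set of $M_D$ from Proposition \ref{P2} (using Lemma \ref{L16}(b) for the $(h_j)_D$'s and Lemma \ref{L1}(b) for the diagonal terms) that $\Psi_D(M_D)\subseteq(\psi\cdot M)_D$, and invoke the general fact that $\cO$-linear maps preserve integral closure — a fact which is itself established by exactly the inline curve computation the paper performs. Your version is a bit more conceptual and cleanly separates the algebraic bookkeeping from the integral-closure step; the paper's version is more explicit and self-contained. The concern you flag about $\Psi_D$ being a genuine morphism is not a real obstacle: the components $\psi_i$ are analytic on $X$ (as in Lemma \ref{L21}), so $\psi_i\circ\pi_1,\ \psi_i\circ\pi_2\in\cO_{X\times X}$ and $\Psi_D$ is simply multiplication by a $2\times 2p$ matrix over $\cO_{X\times X}$.
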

	
	\begin{proof}
		Let $h=(h_1,\hdots,h_p)\in M_{S_1}$. Then $h_D\in \overline{M_D}$. We need to check that $(\psi\cdot h)_D\in\overline{(\psi\cdot M)_D}$, for all $\psi:X\rightarrow \mbox{Hom}(\bC^p,\bC)$. Let $\phi=(\phi_1,\phi_2):(\bC,0)\rightarrow(X\times X,(x,x))$ be an arbitrary analytic curve. Since $h_D\in\overline{M_D}$ then we can write $$h_D\circ\phi=\sum\limits_{j}\alpha_j\phi^*((g_j)_D)$$ with $g_j=(g_{1j},\hdots,g_{pj})\in M$ and $\alpha_j\in\cO_{\bC,0}$ for all $j$. Looking to the above equation and comparing the $2p$ coordinates we conclude that \begin{center}$h_i\circ\phi_1=\sum\limits_{j}\alpha_j(g_{ij}\circ\phi_1)$ and $h_i\circ\phi_2=\sum\limits_{j}\alpha_j(g_{ij}\circ\phi_2)$,\end{center}\noindent for all $i\in\{1,\hdots,p\}$. Let $\psi_1,\hdots,\psi_p$ be the coordinate functions of $\psi$.
		
		Thus: $$(\psi\cdot h)_D\circ\phi=(\sum\limits_{i}(\psi_i\circ\phi_1)\cdot(h_i\circ\phi_1),\sum\limits_{i}(\psi_i\circ\phi_2)\cdot(h_i\circ\phi_2))$$$$=(\sum\limits_{i,j}(\psi_i\circ\phi_1)\alpha_j(g_{ij}\circ\phi_1),\sum\limits_{i,j}(\psi_i\circ\phi_2)\alpha_j(g_{ij}\circ\phi_2))$$$$=\sum\limits_{j}\alpha_j((\psi\cdot g_j)_D\circ\phi)\in(\psi\cdot M)_D\circ\phi.$$
	\end{proof}

	The next definition of Lipschitz saturation is motivated by Proposition \ref{proposition G}, which relates $\overline{M}$ and $\overline{I_k(M)}$.
	
	Let $x\in X$ and let $M$ be an $\cO_{X,x}$-submodule of $\cO_{X,x}^p$.
	
	\begin{definition}
		Suppose that $M$ has generic rank $k$ on each irreducible component of $X$. The \textbf{3-Lipschitz saturation of $M$} is denoted by $M_{S_3}$, and is defined by $$M_{S_3}:=\{h\in\cO_{X,x}^{p}\mid I_k(h,M)\subseteq(I_k(M))_S\}.$$
	\end{definition}
	
	Let us prove that the third Lipschitz saturation satisfies the expected properties related to integral closure.
	
	\begin{proposition} Let $x\in X$ and suppose that $M\subseteq\cO_{X,x}^p$ is an $\cO_{X,x}$-submodule of generic rank $k$ on each irreducible component of $X$. Then:
		\begin{enumerate}
			\item [a)] $M_{S_3}$ is an $\cO_{X,x}$-submodule of $\cO_{X,x}^p$;
			
			\item [b)] $M\subseteq M_{S_3} \subseteq \overline{M}$. In particular, $M$ is a reduction of $M_{S_3}$ and $e(M,M_{S_3})=0$.
		\end{enumerate}
	\end{proposition}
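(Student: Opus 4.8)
The plan is to mimic the arguments already used for $M_{S_1}$ and $M_{S_2}$, now translating everything through the ideal $I_k(h,M)$ and the classical Lipschitz saturation of ideals. For part (a), I would fix $\alpha\in\cO_{X,x}$ and $h,h'\in M_{S_3}$, so that $I_k(h,M)\subseteq(I_k(M))_S$ and $I_k(h',M)\subseteq(I_k(M))_S$. The key observation is that a $k\times k$ minor $\cJ_{IJ}(\alpha h+h',M)$ expands multilinearly in its first column: it is either a minor not involving the new column (hence in $I_k(M)$), or equals $\alpha\,\cJ_{IJ}(h,M)+\cJ_{IJ}(h',M)$ for the appropriate index $J$ with $j_1=1$. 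Either way it lands in the ideal $(I_k(M))_S$, because $(I_k(M))_S$ is an ideal of $\cO_{X,x}$ containing $I_k(M)$. Hence $I_k(\alpha h+h',M)\subseteq(I_k(M))_S$ and $\alpha h+h'\in M_{S_3}$; one should also note $0\in M_{S_3}$ since $I_k(0,M)=I_k(M)$.

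For part (b), the inclusion $M\subseteq M_{S_3}$ is immediate: if $h\in M$ then $(h,M)=M$, so $I_k(h,M)=I_k(M)\subseteq(I_k(M))_S$. For the inclusion $M_{S_3}\subseteq\overline{M}$, take $h\in M_{S_3}$. Then $I_k(h,M)\subseteq(I_k(M))_S\subseteq\overline{I_k(M)}$, using the basic containment $J_S\subseteq\overline{J}$ for ideals recalled in Section~1. This says $I_k(h,M)\subseteq\overline{I_k(M)}$; since $M$ has generic rank $k$ on each irreducible component of $X$, $k$ is precisely the generic rank of $(h,M)$ on each component (adding $h$ cannot raise the rank beyond $k$ once $h\in\overline M$, and more directly $I_k(h,M)\subseteq\overline{I_k(M)}$ forces $I_{k+1}(h,M)\subseteq\overline{I_{k+1}(M)}=\overline{(0)}$ on each component, so the generic rank of $(h,M)$ is $k$). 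Proposition~\ref{proposition G} then yields $h\in\overline M$. The final clauses—that $M$ is a reduction of $M_{S_3}$ and $e(M,M_{S_3})=0$—follow formally from $M\subseteq M_{S_3}\subseteq\overline M$ exactly as in Propositions~\ref{P15} and~\ref{P19}.

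The main obstacle I anticipate is the bookkeeping in part (a): one must be careful that the multilinear expansion of $\cJ_{IJ}(\alpha h + h', M)$ in the first column produces only terms that are visibly either minors of $[M]$ alone or scalar combinations of $\cJ_{IJ}(h,M)$ and $\cJ_{IJ}(h',M)$, with the \emph{same} index set. Writing $[\alpha h + h', M]_{IJ}$ explicitly (its first column is $(\alpha h_{i_1}+h'_{i_1},\ldots,\alpha h_{i_k}+h'_{i_k})^{T}$ when $j_1=1$, and is a column of generators of $M$ when $j_1>1$) makes this transparent, and then linearity of the determinant in that column closes the argument. A secondary point worth stating cleanly is the rank claim in part (b) ensuring Proposition~\ref{proposition G} applies with the same $k$ for $(h,M)$ as for $M$; this is the only place where the hypothesis ``generic rank $k$ on each irreducible component'' is genuinely used, and it is the same mechanism that underlies Lemma~\ref{L21} and Proposition~\ref{P18}.
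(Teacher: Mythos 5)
Parts (a) and the first inclusion of (b) match the paper's argument: the paper also uses the multilinear expansion $I_k(\alpha g+h,M)\subseteq\alpha I_k(g,M)+I_k(h,M)$ and the trivial containment $I_k(M)\subseteq(I_k(M))_S$. Where you diverge from the paper is in the parenthetical in (b): you add a justification for why the generic rank of $(h,M)$ equals $k$, a point the paper's proof silently skips but which is genuinely needed, since Proposition~\ref{proposition G} is stated with $k_i$ equal to the generic rank of $(h,\cM_i)$, not of $\cM_i$.

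Unfortunately, neither of the two justifications you give holds. The clause ``adding $h$ cannot raise the rank beyond $k$ once $h\in\overline{M}$'' is circular: $h\in\overline{M}$ is precisely the conclusion you are trying to reach. The alternative, that $I_k(h,M)\subseteq\overline{I_k(M)}$ forces $I_{k+1}(h,M)\subseteq\overline{I_{k+1}(M)}$, is false as an implication between Fitting ideals. For example, on $X=\bC$ with coordinate $t$, with $p=2$, take $M$ generated by $(t,0)$ (generic rank $1$) and $h=(0,t)$. Then $I_1(h,M)=(t)=\overline{I_1(M)}$, yet $I_2(h,M)=(t^2)\neq 0=\overline{I_2(M)}$, and the generic rank of $(h,M)$ is $2$, not $1$. (This same $h$ satisfies $I_1(h,M)\subseteq(I_1(M))_S$ while $h\notin\overline{M}$, so the rank subtlety you noticed is not cosmetic and the step needs a real argument.) You should either supply a correct proof that the generic rank of $(h,M)$ equals $k$ under the hypothesis $I_k(h,M)\subseteq(I_k(M))_S$, or drop the parenthetical and apply Proposition~\ref{proposition G} directly as the paper does --- but in that case you, like the paper, are leaving the rank issue unaddressed.
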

	
	\begin{proof}
		(a) Let $g,h\in M_{S_3}$ and $\alpha\in\cO_{X,x}$. By the basic properties of determinants we have that $$I_k(\alpha g+h,M)\subseteq \alpha I_k(g,M)+I_k(h,M)\subseteq(I_k(M))_S.$$ 
		
		Hence, $\alpha g+h\in M_{S_3}$.	 
		
		(b) Since $I_k(M)\subseteq (I_k(M))_S$ then $M\subseteq M_{S_3}$. Now, let $h\in M_{S_3}$. Thus, $I_k(h,M)\subseteq (I_k(M))_S\subseteq\overline{I_k(M)}$ which implies $h\in\overline{M}$.
	\end{proof}
	
	In the next result, we show that the second Lipschitz saturation is stronger than the third one.
	
	\begin{proposition}\label{P22}
		Let $x\in X$ and suppose that $M\subseteq\cO_{X,x}^p$ is an $\cO_{X,x}$-submodule of generic rank $k$ on each irreducible component of $X$. Then $$M_{S_2}\subseteq M_{S_3}.$$
	\end{proposition}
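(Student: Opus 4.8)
The plan is to unwind both saturations in terms of $k\times k$ minors and then reduce the inclusion to a monotonicity property of the Lipschitz saturation of ideals. Let $h\in M_{S_2}$. Since $(I_k(M))_S$ is an ideal of $\cO_{X,x}$, to prove $I_k(h,M)\subseteq (I_k(M))_S$ (that is, $h\in M_{S_3}$) it suffices to show that each $k\times k$ minor $\cJ_{IJ}(h,M)$ of $[h,M]$ lies in $(I_k(M))_S$, and by the Remark after Lemma \ref{L21} we may assume $I=(i_1<\cdots<i_k)$ and $J=(j_1<\cdots<j_k)$. If $j_1>1$, then $\cJ_{IJ}(h,M)$ is a $k\times k$ minor taken only among the generators of $M$, so $\cJ_{IJ}(h,M)\in I_k(M)\subseteq (I_k(M))_S$ and this case is done.

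It remains to handle $j_1=1$. Here I would apply Lemma \ref{L21} (whose hypothesis is exactly the generic-rank assumption we are given) to obtain some $\psi:X\to \mathrm{Hom}(\bC^p,\bC)$ with $\psi\cdot h=\cJ_{IJ}(h,M)$ and $\psi\cdot M\subseteq I_k(M)$. Since $h\in M_{S_2}$, and membership in $M_{S_2}$ is required for \emph{every} such $\psi$, we get $\psi\cdot h\in(\psi\cdot M)_S$. The crucial point is then that the Lipschitz saturation of ideals is monotone: if $I\subseteq J$ are ideals of $\cO_{X,x}$ then $I_S\subseteq J_S$. This is not transparent from the blow-up definition, because the saturated blow-ups of $I$ and of $J$ are distinct spaces; but it follows immediately from Theorem 2.3 of \cite{G2}, since $I\subseteq J$ gives $I_D\subseteq J_D$, hence $\overline{I_D}\subseteq\overline{J_D}$, and the equivalence $g\in I_S\iff g_D\in\overline{I_D}$ then yields $I_S\subseteq J_S$. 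Applying this with $\psi\cdot M\subseteq I_k(M)$ gives $(\psi\cdot M)_S\subseteq (I_k(M))_S$, whence $\cJ_{IJ}(h,M)=\psi\cdot h\in (I_k(M))_S$. Together with the case $j_1>1$, this shows $I_k(h,M)\subseteq (I_k(M))_S$, i.e. $h\in M_{S_3}$.

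I do not expect a genuine obstacle here: once the monotonicity of $(\cdot)_S$ is recorded, the argument is purely formal, and that monotonicity is a one-line consequence of Theorem 2.3 of \cite{G2} together with the evident monotonicity of the double construction and of integral closure. The only bookkeeping is the split into the cases $j_1=1$ and $j_1>1$ and the reduction to strictly increasing indexes afforded by the Remark after Lemma \ref{L21}.
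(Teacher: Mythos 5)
Your proposal is correct and follows essentially the same path as the paper's proof: reduce to the $k\times k$ minors $\cJ_{IJ}(h,M)$, dispatch $j_1>1$ trivially, and for $j_1=1$ use Lemma~\ref{L21} to produce $\psi$ with $\psi\cdot h=\cJ_{IJ}(h,M)$ and $\psi\cdot M\subseteq I_k(M)$, then pass from $\psi\cdot h\in(\psi\cdot M)_S$ to $\psi\cdot h\in(I_k(M))_S$. The only cosmetic difference is that you isolate the monotonicity of the ideal Lipschitz saturation as a named lemma (proved via Theorem~2.3 of~\cite{G2}), whereas the paper carries out the same monotonicity inline in the language of doubles and integral closure, writing $(\psi\cdot h)_D\in\overline{(\psi\cdot M)_D}\subseteq\overline{(I_k(M))_D}$.
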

	
	\begin{proof}
		Suppose $h\in M_{S_2}$. By Theorem 2.3 of \cite{G2} it is enough to prove that $(I_k(h,M))_D\subseteq\overline{(I_k(M))_D}$. So, we have to prove that all the generators $(\cJ_{IJ}(h,M))_D$ are in $\overline{(I_k(M))_D}$, for all $i\in\{1,\hdots,n\}$ and for all indexes $I$ and $J$. Write $I=(i_1,\hdots,i_k)$ and $J=(j_1,\hdots,j_k)$. We have two cases. 
		
		Suppose $j_1>1$. Then, as we saw before, $\cJ_{IJ}(h,M)\in I_k(M)$, so $(\cJ_{IJ}(h,M))_D\in (I_k(M))_D$, in particular, $(\cJ_{IJ}(h,M))_D\in \overline{(I_k(M))_D}$.
		
		Suppose $j_1=1$. By Lemma \ref{L21} there exists $\psi:X\rightarrow \mbox{Hom}(\bC^p,\bC)$ such that $\psi\cdot h=\cJ_{IJ}(h,M)$ and $\psi\cdot M\subseteq I_k(M)$. Since $h$ is in the 2-Lipschitz saturation of $M$ then $\psi\cdot h\in(\psi\cdot M)_S$, which is equivalent to $(\psi\cdot h)_D\in\overline{(\psi\cdot M)_D}$. Thus, $$(\cJ_{I,J}(h,M))_D=(\psi\cdot h)_D\in\overline{(\psi\cdot M)_D}\subseteq\overline{(I_k(M))_D}.$$
	\end{proof}
	
	The next result shows the third saturation is closely related to the pullback of the saturated blow-up map.
	
	\begin{proposition}\label{P20}
		Let $x\in X$ and suppose that $M\subseteq\cO_{X,x}^p$ is an $\cO_{X,x}$-submodule of generic rank $k$ on each irreducible component of $X$. Let $\pi: SB_{I_{k}(M)}(X)\rightarrow B_{I_{k}(M)}(X)$, $p: B_{I_{k}(M)}(X)\rightarrow X$ and $\pi_S=p\circ\pi$ be the projections maps. Let $h\in\cO_{X,x}^p$. Then: \begin{center} $h\in M_{S_3}$ if and only if $\pi_S^{*}(h)\in\pi_S^{*}(M)$.\end{center}
	\end{proposition}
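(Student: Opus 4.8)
The plan is to prove both implications by translating the two conditions into statements about the $k\times k$ minors of $[h,M]$ over $Z:=SB_{I_k(M)}(X)$, and then exploiting that the ideal $\pi_S^{*}(I_k(M))$ is invertible on $Z$. By definition of the Lipschitz saturation of an ideal, $(I_k(M))_S=\{g\in\cO_{X,x}\mid\pi_S^{*}(g)\in\pi_S^{*}(I_k(M))\}$; since $I_k(h,M)$ is generated by the minors $\cJ_{IJ}(h,M)$ and $(I_k(M))_S$ is an ideal, one has $h\in M_{S_3}$ if and only if $\pi_S^{*}(\cJ_{IJ}(h,M))\in\pi_S^{*}(I_k(M))$ for all $k$-indices $I,J$. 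Fixing a matrix of generators $[M]$ with columns $g_1,\ldots,g_r$ and using the pulled-back matrix $[\pi_S^{*}M]$ with columns $\pi_S^{*}g_1,\ldots,\pi_S^{*}g_r$, the fact that pullback is a ring homomorphism gives $\pi_S^{*}(\cJ_{IJ}(h,M))=\cJ_{IJ}(\pi_S^{*}h,\pi_S^{*}M)$ and $\pi_S^{*}(I_k(M))=I_k(\pi_S^{*}M)$. So it is enough to prove, on $Z$, that $\pi_S^{*}(h)\in\pi_S^{*}(M)$ holds if and only if every $k\times k$ minor of $[\pi_S^{*}h,\pi_S^{*}M]$ lies in $I_k(\pi_S^{*}M)$.

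The forward implication is the easy one. Working near a point of $\pi_S^{-1}(x)$, if $\pi_S^{*}(h)=\sum_j a_j\,\pi_S^{*}(g_j)$ then the $\pi_S^{*}h$-column of $[\pi_S^{*}h,\pi_S^{*}M]$ is an $\cO_Z$-combination of the remaining columns, so expanding any $k\times k$ minor by multilinearity in that column exhibits it as an $\cO_Z$-combination of $k\times k$ minors of $[\pi_S^{*}M]$, hence as an element of $I_k(\pi_S^{*}M)$ (the minors not using that column lie there trivially). As this holds near every point of $\pi_S^{-1}(x)$, it holds at the level of germs, and therefore $h\in M_{S_3}$.

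For the converse — the substantive part — I would fix $q\in\pi_S^{-1}(x)$, set $\cO:=\cO_{Z,q}$ (a reduced analytic local ring, $Z$ being the blow-up of a reduced space followed by the finite birational Lipschitz saturation), and let $K$ be its total ring of fractions. Because $I_k(M)$ is generated by the $k\times k$ minors of $[M]$, on a suitable chart the invertible ideal $\pi_S^{*}(I_k(M))\cO$ is generated by one such pulled-back minor $\delta=\cJ_{I_0J_0}([\pi_S^{*}M])$, which is then a nonzerodivisor. With $A:=[\pi_S^{*}M]_{I_0J_0}$ (so $\det A=\delta$) I would solve the $I_0$-rows by Cramer's rule: put $c:=\delta^{-1}\,\mathrm{adj}(A)\cdot(\pi_S^{*}h)_{I_0}\in K^{k}$. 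Each coordinate of $\mathrm{adj}(A)\cdot(\pi_S^{*}h)_{I_0}$ equals, up to sign, a $k\times k$ minor of $[\pi_S^{*}h,\pi_S^{*}M]$ that uses the $\pi_S^{*}h$-column, hence lies in $\delta\cO$ by hypothesis, so in fact $c\in\cO^{k}$. Writing $J_0=(b_1,\ldots,b_k)$ and $w:=\pi_S^{*}h-\sum_{i=1}^{k}c_i\,\pi_S^{*}g_{b_i}\in\cO^{p}$, one gets $w_{I_0}=(\pi_S^{*}h)_{I_0}-Ac=0$. It remains to see $w=0$. Here I would use that $M_{S_3}\subseteq\overline M$ (the proposition preceding Proposition~\ref{P22}), so $h\in\overline M$, whence $\pi_S^{*}(h)$ lies in the generic module $\widetilde M:=\pi_S^{*}(M)\otimes_{\cO}K\subseteq K^{p}$ (the modification $\pi_S$ preserves generic rank); thus $w\in\widetilde M$. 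Since the columns of $A$ are $K$-independent, the $k$ elements $\pi_S^{*}g_{b_1},\ldots,\pi_S^{*}g_{b_k}$ form a $K$-basis of the $k$-dimensional space $\widetilde M$, and restriction to the rows $I_0$ is injective on $\widetilde M$; hence $w_{I_0}=0$ forces $w=0$ in $K^{p}$, so $w=0$ in $\cO^{p}$. Therefore $\pi_S^{*}(h)=\sum_i c_i\,\pi_S^{*}g_{b_i}\in\pi_S^{*}(M)$ near $q$, and since $q$ was arbitrary, $\pi_S^{*}(h)\in\pi_S^{*}(M)$.

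I expect the last step to be the main obstacle: the minor hypothesis gives no information along the components of $Z$ on which $\pi_S^{*}(I_k(M))$ is the unit ideal, so one genuinely needs the independent input $h\in\overline M$ (equivalently, that $h$ is generically a section of $M$) together with the reducedness of $Z$ in order to kill the residual term $w$; the Cramer computation itself — in particular the identification of the entries of $\mathrm{adj}(A)\cdot(\pi_S^{*}h)_{I_0}$ with minors of $[\pi_S^{*}h,\pi_S^{*}M]$ — is the only genuine calculation. When $X$ is reducible one carries out this argument over each minimal prime of $\cO_{Z,q}$ separately, which is harmless because $\delta$ remains a nonzerodivisor on each component.
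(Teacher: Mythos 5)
Your proof is correct and takes essentially the same route as the paper's. The easy direction ($\pi_S^*(h)\in\pi_S^*(M)\Rightarrow h\in M_{S_3}$) is handled identically, by multilinearity of the determinant in the $h$-column. For the substantive direction both proofs invert a single pulled-back $k\times k$ minor $\delta$ that generates the invertible ideal $\pi_S^*(I_k(M))$, solve the rows $I_0$ via Cramer's rule (and identify the cofactor expressions with $k\times k$ minors of $[h,M]$ to see, using $I_k(h,M)\subseteq (I_k(M))_S$, that the resulting coefficients are regular), and then show the residual $w=\pi_S^*h-\sum c_i\,\pi_S^*g_{b_i}$ vanishes using $h\in\overline{M}$ (via $M_{S_3}\subseteq\overline M$) together with the generic-rank-$k$ constraint. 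The only real difference is how this last step is formalized: the paper argues along a generic test curve $\phi$ into $SB_{I_k(M)}(X)$, noting that a nonzero residual with $\star_I=0$ would push $\phi^*(\pi_S^*M)$ to rank $k+1$, while you pass to the total ring of fractions $K$ of $\cO_{Z,q}$ and use $K$-linear independence of the $I_0$-rows of $A$; these are two phrasings of the same rank argument, and yours is if anything a bit cleaner since it avoids appealing to the Zariski-density of curve images.
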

	
	\begin{proof}
		Fix a set of generators $\{g_1,\hdots,g_r\}$ of $M$.
		We work at $x'\in E$, $E=\pi^{-1}(E_B)$, $E_B$ the exceptional divisor of $B_{I_{k}(M)}(X)$. 
		
		Suppose first that $I_k(h,M)\subseteq (I_k(M))_S$. Let $\pi_S^{*}(\cJ_{I,J}(M))$ be a local generator of the principal ideal $\pi_S^{*}(I_k(M))$. Then, by Cramer's rule we can write $$(\cJ_{I,J}(M)\circ\pi_S)(h_I\circ\pi_S)=\sum\limits_{j\in J}(\cJ_{I,j}(h_I,M_I)\circ\pi_S)(m_{I,j}\circ\pi_S) \eqno(1)$$ with $m_j\in M$. We claim in fact that $$(\cJ_{I,J}(M)\circ\pi_S)(h\circ\pi_S)=\sum\limits_{j\in J}(\cJ_{I,j}(h_I,M_I)\circ\pi_S)(m_{j}\circ\pi_S).$$ To see this pick a curve $\phi:(\bC,0)\rightarrow(SB_{I_k(M)}(X),x')$ and choose $\phi$ so that the rank of $\pi_S^*(M)|_{\phi}$ is generically $k$. Since by hypothesis $h\in\overline{M}$ then $h\circ\pi_S\circ\phi\in\phi^*(\pi_S^*(M))$. So, the element $$\star=h\circ\pi_S\circ\phi-\sum\limits_{j\in J}(\frac{\cJ_{I,j}(h_I,M_I)\circ\pi_S}{\cJ_{I,J}(M)\circ\pi_S}\circ\phi)(m_{j}\circ\pi_S\circ\phi)\in\phi^*(\pi_S^*(M))$$ because the above quotients by hypothesis are regular functions on $SB_{I_k(M)}(X)$. By equation (1), the above element has $0$ for the entries indexed by $I$. So, if $\star$ is not zero then $\phi^*(\pi_S^*(M))$ has rank at least $k+1$, which is a contradiction. Since the images of $\phi$ fill up a Z-open set, it follows that $\star$ is locally zero. Therefore, $$h\circ\pi_S=\sum\limits_{j\in J}(\frac{\cJ_{I,j}(h_I,M_I)\circ\pi_S}{\cJ_{I,J}(M)\circ\pi_S})(m_{j}\circ\pi_S)\in\pi_S^*(M).$$
		
		Conversely, suppose that $\pi_S^{*}(h)\in\pi_S^{*}(M)$. Then we can write $$h\circ\pi_S=\sum\limits_{j=1}^{r}\alpha_j(g_j\circ\pi_S) \eqno(2)$$
		where $\alpha_j\in\widetilde{\cO}_{B_{I_k(M)}(X),x'}$, $\forall j\in\{1,\hdots,r\}$. 
		
		\noindent Let $c$ be an arbitrary generator of $I_k(h,M)$. Then we can write $c=\det[h,M]_{IJ}$ for some $k$-indexes $I$ and $J$. If the $k$-index $J$ does not pick the first column of $[h,M]$, then $c$ is a $k\times k$ minor of the matrix $[M]$, hence $c\in I_k(M)\subseteq (I_k(M))_S$ and we are done. Thus we may assume that $j_1=1$. Then, we can write $$c=\det\left[\begin{matrix}
			h_I  &  (g_{j_1})_I  &\hdots &(g_{j_{k-1}})_I
		\end{matrix}\right].$$
		Applying the pullback of the projection map we have $$\pi_S^*(c)=\det\left[\begin{matrix}
			h_I\circ\pi_S  &  (g_{j_1})_I\circ\pi_S  &\hdots &(g_{j_{k-1}})_I\circ\pi_S
		\end{matrix}\right].$$
		
		\noindent By the equation (2), if we look only at the entries of the $k$-index $I$, we get $$h_I\circ\pi_S=\sum\limits_{j=1}^{r}\alpha_j((g_j)_I\circ\pi_S).$$
		
		\noindent Using the linearity of the determinant in the first column, we conclude that $\pi_S^*(c)=\sum\limits_{j=1}^{r}\alpha_j(v_j\circ\pi_S)$, where $v_j:=\det\left[\begin{matrix}
			(g_j)_I  &  (g_{j_1})_I  &\hdots &(g_{j_{k-1}})_I
		\end{matrix}\right],$ for all $j\in\{1,\hdots,r\}$. Clearly, $v_j\in I_k(M)$, $\forall j\in\{1,\hdots,r\}$. Hence, $\pi_S^*(c)\in\pi_S^*(I_k(M))$ and by the definition of the Lipschitz saturation of an ideal, we conclude that $c\in(I_k(M))_S$. 
	\end{proof}

	
	\section{Generic equivalence}
	
	If $\cM$ is a sheaf of $\cO_X$-submodules of $\cO_{X}^p$, we can construct the sheaf $\cM_{S_i}$ of $\cO_X$-submodules of $\cO_{X}^p$ associated to the $i$-th Lipschitz saturation, $i\in\{1,2,3\}$, in a way that $$(\cM_{S_i})_x=(\cM_x)_{S_i}\subseteq\cO_{X,x}^p,$$\noindent  $\forall x\in X$. Proposition \ref{P23} and \ref{P22} tell us that $\cM_{S_1}\subseteq\cM_{S_2}\subseteq\cM_{S_3}$. From now on, we look for conditions so that the above notions of Lipschitz saturations are equivalent.
	
	Naturally, the first step in this direction is to seek conditions for an element in $M_{S_3}$ to be in $M_{S_1}$. The next proposition gives us such a condition.
	
	\begin{proposition}\label{T4.17}
		Let $x\in X$ and suppose that $M\subseteq\cO_{X,x}^p$ is an $\cO_{X,x}$-submodule of generic rank $k$ on each irreducible component of $X$, and let $h\in\cO_{X,x}^p$. Assume that there exists an ideal $I$ of $\cO_{X\times X,(x,x)}^{2p}$ such that
		
		\begin{itemize}
			\item $ II_2((I_k(\cM))_D)\subseteq\overline{I_{2k}(M_D)}$;
			
			\item $I_{2k}(h_D,M_D)\subseteq\overline{II_2((I_k(h,M))_D)}$.
		\end{itemize}
		
		If $h\in M_{S_3}$ then $h\in M_{S_1}$.
	\end{proposition}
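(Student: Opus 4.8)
The plan is to run the two displayed hypotheses in parallel with the characterization of $M_{S_3}$ and with Proposition \ref{proposition G} applied to $M_D$; note first that $M_{S_3}\subseteq\overline{M}$, so $h\in M_{S_3}$ already gives $h\in\overline{M}$ and only the upgrade to $h_D\in\overline{M_D}$ remains. I would begin by unpacking $h\in M_{S_3}$, i.e. $I_k(h,M)\subseteq (I_k(M))_S$. By Theorem 2.3 of \cite{G2} each $c\in I_k(h,M)$ then satisfies $c_D\in\overline{(I_k(M))_D}$; since $(I_k(h,M))_D$ is generated over $\cO_{X\times X,(x,x)}$ by these $c_D$ and the integral closure of a module is again a module, this gives $(I_k(h,M))_D\subseteq\overline{(I_k(M))_D}$. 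Passing to $2\times 2$ minors (monotone under inclusions of modules) and then using $I_2(\overline{N})\subseteq\overline{I_2(N)}$ — immediate from the curve criterion, or from the equivalence in \cite{G3} since $(I_k(M))_D$ has generic rank $2$ on every irreducible component of $X\times X$ by Corollary \ref{C2.10} — I obtain $$I_2\big((I_k(h,M))_D\big)\subseteq\overline{I_2\big((I_k(M))_D\big)}.$$

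Next I would multiply by the ideal $I$, using the elementary inclusion $I\,\overline{J}\subseteq\overline{IJ}$ and the first hypothesis $I\,I_2((I_k(M))_D)\subseteq\overline{I_{2k}(M_D)}$, to get $$I\,I_2\big((I_k(h,M))_D\big)\subseteq\overline{I\,I_2\big((I_k(M))_D\big)}\subseteq\overline{I_{2k}(M_D)}.$$ Combining this with the second hypothesis $I_{2k}(h_D,M_D)\subseteq\overline{I\,I_2((I_k(h,M))_D)}$ and applying integral closure once more (it is monotone and idempotent) yields $$I_{2k}(h_D,M_D)\subseteq\overline{I_{2k}(M_D)}.$$

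To conclude $h_D\in\overline{M_D}$, I would apply Proposition \ref{proposition G} to the submodule $M_D\subseteq\cO_{X\times X,(x,x)}^{2p}$ and the element $h_D$. The one point to verify is that the generic rank of $(h_D,M_D)$ on each irreducible component $V_i\times V_j$ of $X\times X$ equals $2k$: since $h\in\overline{M}$, the module $(h,M)$ has generic rank $k$ on each $V_i$, so $(h,M)_D$ has generic rank $2k$ on each $V_i\times V_j$ by Corollary \ref{C2.10}; and a routine computation with Lemma \ref{L1} — together with the argument from the proof of Proposition \ref{P15}(a), which only needs $h\in\overline{M}$ — shows that $(h_D,M_D)$ is a reduction of $(h,M)_D$ and hence has the same generic rank $2k$ on each $V_i\times V_j$. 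Proposition \ref{proposition G} then applies with every $k_{ij}=2k$, and the inclusion above (restricted to each component) gives $h_D\in\overline{M_D}$, i.e. $h\in M_{S_1}$.

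The step I expect to be the main obstacle is not any single deep argument but the bookkeeping with generic ranks — ensuring $(h_D,M_D)$ has generic rank exactly $2k$ on every component of $X\times X$, which pins down the index in Proposition \ref{proposition G} and is where $h\in\overline{M}$, Proposition \ref{P2.13}/Corollary \ref{C2.10}, and the reduction argument all enter — together with checking the two ``soft'' inclusions $I_2(\overline{N})\subseteq\overline{I_2(N)}$ and $I\,\overline{J}\subseteq\overline{IJ}$ that splice the hypotheses together. The rest is a formal chase of inclusions.
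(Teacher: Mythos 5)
Your proof is correct and follows essentially the same route as the paper: unpack $h\in M_{S_3}$ via Theorem~2.3 of \cite{G2} to get $(I_k(h,M))_D\subseteq\overline{(I_k(M))_D}$, splice in the two hypotheses (the paper does this by a direct curve chase $\phi^*(I_{2k}(h_D,M_D))\subseteq\phi^*(I\cdot I_2((I_k(h,M))_D))=\phi^*(I)\cdot I_2(\phi^*((I_k(M))_D))\subseteq\phi^*(I_{2k}(M_D))$; you do it via the equivalent ``soft'' inclusions $I_2(\overline{N})\subseteq\overline{I_2(N)}$ and $I\,\overline{J}\subseteq\overline{IJ}$, both of which are the curve criterion in disguise) to reach $I_{2k}(h_D,M_D)\subseteq\overline{I_{2k}(M_D)}$, and then invoke Proposition~\ref{proposition G}. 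The one place where you are more careful than the paper is worth keeping: Proposition~\ref{proposition G} requires the index to be the generic rank of $(h_D,M_D)$, not of $M_D$, and the paper invokes it after only citing Theorem~\ref{T2.9} for $M_D$. Your verification that $(h_D,M_D)$ has generic rank $2k$ --- using $h\in\overline{M}$ (from $M_{S_3}\subseteq\overline{M}$), Corollary~\ref{C2.10}, and the reduction $(h_D,M_D)\subseteq(h,M)_D\subseteq\overline{(h_D,M_D)}$ supplied by Lemma~\ref{L1} and the curve criterion --- closes a step the paper leaves implicit; it is correct and genuinely tightens the argument.
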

	
	\begin{proof}
		Suppose $h\in M_{S_3}$. Then, $(I_k(h,M))_D\subseteq\overline{(I_k(M))_D}$ and so $\overline{(I_k(h,M))_D}=\overline{(I_k(M))_D}$.
		
		Let us prove that $I_{2k}(h_D,M_D)\subseteq\overline{I_{2k}(M_D)}$. In fact, let $\phi:(\bC,0)\rightarrow(X\times X,(x,x))$ be an arbitrary analytic curve. Since $$\overline{(I_k(h,M))_D}=\overline{(I_k(M))_D}$$\noindent then $\phi^*((I_k(h,M))_D)=\phi^*((I_k(M))_D)$. Using the inclusions of the hypothesis and the curve criterion for the integral closure of modules, we have $$\phi^*(I_{2k}(h_D, M_D))\subseteq\phi^*(I\cdot I_2((I_k(h, M))_D))=\phi^*(I)\cdot I_2(\phi^*((I_k(M))_D))$$$$=\phi^*(I\cdot I_2((I_k(M))_D))\subseteq\phi^*(I_{2k}(M_D)).$$
		
		By Theorem \ref{T2.9}, we have that $M_D$ has generic rank $2k$ on each component of $X\times X$. Hence, by Proposition \ref{proposition G} we conclude that $h_D\in\overline{M_D}$, i.e, $h\in M_{S_1}$.  	 
	\end{proof}	
	
	In what follows, we prove some auxiliary results that will be useful for applying the previous proposition to sheaves of submodules of $\cO_{X}^p$. First, it is convenient for us to establish a notation beforehand. For each object $A$, we denote $A=A\circ\pi_1$ and $A'=A'\circ\pi_2$. For $k$-indexes $I,J$, we denote by $M_{IJ}$ the submatrix of $[M]$ defined by these $k$-indexes.
	
	\begin{lemma}
		Let $x\in X$, $M$ an $\cO_{X,x}$-submodule of $\cO_{X,x}^p$ and $k\in\bN$. Then $$(z_{t_1}-z_{t_1}')\cdots(z_{t_k}-z_{t_k}')\det(M_{IJ})\det(M_{KL}')\in I_{2k}(M_D) \mbox{ at }(x,x)$$ for every $t_1,\hdots,t_k\in\{1,\hdots,n\}$ and $k$-indexes $I,J,K,L$.
		
	\end{lemma}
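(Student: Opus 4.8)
The plan is to exhibit the element
$(z_{t_1}-z_{t_1}')\cdots(z_{t_k}-z_{t_k}')\det(M_{IJ})\det(M_{KL}')$ as a $2k\times 2k$ minor of a matrix of generators of $M_D$, up to sign. By Proposition \ref{P2}(b), the set $\cB=\{(h_j)_D\}_{j=1}^r \cup \{(0_{\cO_{X\times X}^{p}},(z_i\circ\pi_1-z_i\circ\pi_2)(h_j\circ\pi_2))\}{_{j=1}^{r}}_{i=1}^n$ generates $\cM_D$. The first family of generators, written in the standard basis of $\cO_{X\times X}^{2p}$ (first block of $p$ coordinates coming from $\pi_1$, second block from $\pi_2$), has the shape $\bigl(\begin{smallmatrix} g_j\circ\pi_1 \\ g_j\circ\pi_2 \end{smallmatrix}\bigr)$; the second family has the shape $\bigl(\begin{smallmatrix} 0 \\ (z_i\circ\pi_1-z_i\circ\pi_2)(g_j\circ\pi_2) \end{smallmatrix}\bigr)$, with a zero top block.

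First I would set up the column selection. From the first family pick the $k$ columns indexed by $J$ (i.e.\ the generators $g_{j}$ for $j$ running over $J$), and from the second family pick the $k$ columns given by the pairs $(t_s, \ell_s)$ where $L=(\ell_1,\dots,\ell_k)$; that is, the columns $\bigl(\begin{smallmatrix} 0 \\ (z_{t_s}\circ\pi_1-z_{t_s}\circ\pi_2)(g_{\ell_s}\circ\pi_2) \end{smallmatrix}\bigr)$. For the rows, select the $k$ rows indexed by $I$ from the top ($\pi_1$) block and the $k$ rows indexed by $K$ from the bottom ($\pi_2$) block. This gives a $2k\times 2k$ submatrix of $[\cM_D]$, which in block form looks like
\[
\begin{bmatrix}
(g_j\circ\pi_1)_{i}\,\big|_{i\in I,\ j\in J} & 0 \\[4pt]
(g_j\circ\pi_2)_{i}\,\big|_{i\in K,\ j\in J} & (z_{t_s}-z_{t_s}')(g_{\ell_s}\circ\pi_2)_{i}\,\big|_{i\in K,\ s}
\end{bmatrix}
=
\begin{bmatrix}
M_{IJ}\circ\pi_1 & 0 \\[4pt]
\ast & (z_{t_s}-z_{t_s}')\,(M_{KL}\circ\pi_2)\cdot(\text{perm})
\end{bmatrix}.
\]
Here I am using the notational convention $z_{t}=z_t\circ\pi_1$, $z_t'=z_t\circ\pi_2$. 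The top-right block is zero because the second family of generators has zero $\pi_1$-block; the bottom-right block is $(M_{KL}\circ\pi_2)$ with its $s$-th column scaled by $(z_{t_s}-z_{t_s}')$ (after possibly reordering columns to match the index $L$).

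Next I would compute the determinant. Because the matrix is block lower-triangular, its determinant is the product of the determinants of the two diagonal blocks:
\[
\det(M_{IJ}\circ\pi_1)\cdot\prod_{s=1}^{k}(z_{t_s}-z_{t_s}')\cdot\det(M_{KL}\circ\pi_2)
= \Bigl(\prod_{s=1}^{k}(z_{t_s}-z_{t_s}')\Bigr)\det(M_{IJ})\circ\pi_1\cdot\det(M_{KL})\circ\pi_2,
\]
which, in the paper's shorthand $A=A\circ\pi_1$, $A'=A'\circ\pi_2$, is exactly $\pm(z_{t_1}-z_{t_1}')\cdots(z_{t_k}-z_{t_k}')\det(M_{IJ})\det(M_{KL}')$ (the sign coming from the column reordering that matches the second-family columns to $L$, and possibly a permutation within the second block). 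Since this is $\pm$ a $2k\times 2k$ minor of a generating matrix of $\cM_D$, it lies in $I_{2k}(\cM_D)$ at $(x,x)$, which is the claim.

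The main obstacle I anticipate is purely bookkeeping: making sure the chosen $2k$ columns are genuinely $2k$ distinct generators from $\cB$ (which forces the pairs $(t_s,\ell_s)$ to be chosen so the selected second-family generators are distinct; if some coincide one simply uses multilinearity/repeats give a zero minor, still in $I_{2k}$), and tracking the sign produced by reordering columns so the bottom-right block reads as $(z_{t_s}-z_{t_s}')M_{KL}'$ rather than a column permutation of it. Neither affects membership in $I_{2k}(\cM_D)$, so the argument goes through; one could alternatively use generating set $\cB'$ or $\cB''$ from Proposition \ref{P2} and a symmetric argument if it streamlines the signs. A clean way to finish is to note that $I_{2k}(\cM_D)$ is generated by the $2k\times 2k$ minors of $[\cM_D]$ and that we have just displayed our element as one such minor (up to sign), hence done.
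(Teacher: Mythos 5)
Your proposal is correct and follows essentially the same route as the paper: both exhibit the quantity as a $2k\times 2k$ minor of the generating matrix $[M_D]$ built from $\cB$, with top rows $I$, bottom rows $K$, left columns $J$ from the first family $(g_j)_D$, and right columns from the second family $(0,(z_{t_s}-z_{t_s}')(g_{\ell_s}\circ\pi_2))$, then evaluate the block lower-triangular determinant. Your extra bookkeeping about signs and distinctness of the selected columns is sound (and in fact automatic once $L$ is an increasing $k$-index, and harmless otherwise since a repeated column gives a zero minor), so the argument is complete.
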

	
	\begin{proof}
		Write $M_{KL}=(g_{rs})$. The $2k\times 2k$ matrix $$\left[\begin{matrix}
			
			M_{IJ}   &  |  & 0_{k\times k} \\
			------- &      &  ------------\\
			
			M_{IJ}'   &   |   &  
			\begin{matrix}
				(z_{t_1}-z_{t_1}')g'_{11}    &   \hdots   & (z_{t_k}-z_{t_k}')g'_{1k}\\
				\vdots                &         &      \vdots    \\
				(z_{t_1}-z_{t_1}')g'_{k1}    &   \hdots   & (z_{t_k}-z_{t_k}')g'_{kk}       
			\end{matrix}

		\end{matrix}\right]$$ is a submatrix of $[M_D]$. 
		
		So, its determinant, which is $(z_{t_1}-z_{t_1}')\cdots(z_{t_k}-z_{t_k}')\det(M_{IJ})\det(M_{KL}')$, belongs to $I_{2k}(M_D)$.	
	\end{proof}
	
	The next result gives us the first condition of Proposition \ref{T4.17} in terms of the ideal coming from the diagonal.
	
	\begin{lemma}\label{L4.20}
		Let $x\in X$, $M$ an $\cO_{X,x}$-submodule of $\cO_{X,x}^p$ and $k\in\bN$. Then: 
		\begin{itemize}
			\item[a)] $I_{\Delta}^{k}I_2((I_k(M))_D)\subseteq I_{2k}(M_D)$ at $(x,x)$;
			\item[b)] If $I_k(M)$ is principal then $I_{\Delta}^{k-1}I_2((I_k(M))_D)\subseteq I_{2k}(M_D)$ at $(x,x)$.
		\end{itemize}
	\end{lemma}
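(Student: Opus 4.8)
The plan is to reduce both inclusions to the preceding lemma (which gives $\omega\,\det(M_{IJ})\det(M_{KL}')\in I_{2k}(M_D)$, where I write $\omega=(z_{t_1}-z_{t_1}')\cdots(z_{t_k}-z_{t_k}')$ for an arbitrary generator of $I_\Delta^k$). The one nonroutine ingredient is to make the generators of $I_2((I_k(M))_D)$ completely explicit, which I would do by applying Proposition \ref{P2} to the ideal $I_k(M)$ and then computing all $2\times 2$ minors of the resulting generating matrix; since the Fitting-type ideal $I_2$ of a module does not depend on the chosen matrix of generators, this is legitimate.

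For part (a): apply Proposition \ref{P2}(a) to $I_k(M)$ with its standard generating set $\{\det(M_{IJ})\}$, the $k\times k$ minors of $[M]$. Then $(I_k(M))_D$ is generated by the doubles $(\det M_{IJ})_D=(\det M_{IJ},\det M_{IJ}')$ together with the ``diagonal tails'' $(0,(z_t-z_t')\det M_{IJ}')$. Computing the $2\times 2$ minors of the matrix whose columns are these generators, one finds (the minors that use two diagonal tails vanish) that $I_2((I_k(M))_D)$ is generated by the antisymmetric expressions $\det M_{IJ}\det M_{KL}'-\det M_{KL}\det M_{IJ}'$ and by the products $(z_t-z_t')\det M_{IJ}\det M_{KL}'$. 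A generating set of $I_\Delta^k I_2((I_k(M))_D)$ is then obtained by multiplying each $\omega$ by each of these. Now $\omega(\det M_{IJ}\det M_{KL}'-\det M_{KL}\det M_{IJ}')$ is a difference of two elements of the form treated by the preceding lemma (swapping the roles of $(I,J)$ and $(K,L)$ in the second term), and $(z_t-z_t')\,\omega\det M_{IJ}\det M_{KL}'$ is an $\cO_{X\times X}$-multiple of one of them; since $I_{2k}(M_D)$ is an ideal, both lie in it, giving (a).

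For part (b): apply Proposition \ref{P2}(a) to $I_k(M)=(c)$ with the \emph{single} generator $c$. Then $(I_k(M))_D$ is generated by $c_D=(c,c')$ and the $n$ tails $(0,(z_t-z_t')c')$, and a direct computation of the $2\times 2$ minors shows that $I_2((I_k(M))_D)=I_\Delta\cdot(cc')$ is generated by $\{(z_t-z_t')cc'\}_{t=1}^{n}$. Hence $I_\Delta^{k-1}I_2((I_k(M))_D)=I_\Delta^{k}\cdot(cc')$, generated by the elements $\omega\, cc'$. To land these in $I_{2k}(M_D)$, write $c=\sum_{IJ}\lambda_{IJ}\det M_{IJ}$ with $\lambda_{IJ}\in\cO_{X,x}$ (possible since $c\in I_k(M)$), so that $cc'=\sum_{IJ,KL}\lambda_{IJ}\lambda_{KL}'\det M_{IJ}\det M_{KL}'$; multiplying by $\omega$ and invoking the preceding lemma term by term shows $\omega\, cc'\in I_{2k}(M_D)$. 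The only place that needs genuine care is the explicit minor computation in each case — in particular the observation that the principal hypothesis collapses $I_2((I_k(M))_D)$ all the way down to $I_\Delta\cdot(cc')$, which is exactly what accounts for the drop from $I_\Delta^{k}$ to $I_\Delta^{k-1}$ — but I do not expect a real obstacle, since the substantive estimate was already established in the preceding lemma.
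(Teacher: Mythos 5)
Your proposal is correct and follows essentially the same route as the paper, with two small differences worth noting.

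In part (a), the paper simply writes down the $2\times(r+nr)$ matrix for $[(I_k(M))_D]$ and asserts the inclusion is a straightforward consequence of the preceding lemma; you make this explicit by listing the three kinds of $2\times 2$ minors and observing that the double-tail ones vanish, the double-vs.-tail ones are of the form $(z_t-z_t')\det M_{IJ}\det M_{KL}'$, and the double-vs.-double ones are the antisymmetric differences $\det M_{IJ}\det M_{KL}'-\det M_{KL}\det M_{IJ}'$. That is exactly the bookkeeping the paper suppresses, and your reduction of each to the preceding lemma (plus ideal stability under $\cO_{X\times X}$-multiplication) is right.

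In part (b), the paper uses the observation that if $I_k(M)$ is principal then one of the $k\times k$ minors $\det M_{IJ}$ already generates it, so the single generator $g$ is itself a minor and the preceding lemma applies directly to $\omega\, g g'$. You avoid that step and instead expand an arbitrary principal generator $c$ as an $\cO_{X,x}$-linear combination $c=\sum_{IJ}\lambda_{IJ}\det M_{IJ}$, hence $\omega\, cc'=\sum\lambda_{IJ}\lambda_{KL}'\,\omega\det M_{IJ}\det M_{KL}'$, then invoke the lemma term by term. Both are valid; the paper's version is a hair shorter once one accepts the fact (true in any local ring, provided the generator is not a zero divisor, and trivial when $I_k(M)=0$) that a principal ideal generated by a finite set is already generated by one of its elements, whereas your expansion sidesteps any such appeal and is a bit more self-contained. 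The substance — collapsing $I_2((I_k(M))_D)$ to $I_\Delta\cdot(cc')$ in the principal case, which is what lets you drop one power of $I_\Delta$ — matches the paper exactly.
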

	
	\begin{proof}
		(a) We have that $$[(I_k(M))_D] =\left[\begin{matrix}
			
			\det(M_{IJ})    &  \hdots   &     0 \\
			\det(M_{IJ}')  & \hdots &   (z_i-z_i')\det(M_{KL}')\\ 	              
		\end{matrix}\right]$$
		varying the $k$-indexes $I,J,K,L$ and $i\in\{1,\hdots,n\}$. Thus, the desired inclusion is a straightforward consequence of the previous lemma.
		
		(b) Since $I_k(M)$ is principal then there exist $k$-indexes $I,J$ such that $g=\det(M_{IJ})$ and $I_k(M)$ is generated by $\{g\}$.Thus we can write $$[(I_k(M))_D] =\left[\begin{matrix}
			
			g    &      0 \\
			g'   &   (z_i-z_i')g'\\ 	              
		\end{matrix}\right]$$
		varying $i\in\{1,\hdots,n\}$. So, in this case $I_2((I_k(M))_D)$ is generated by $\{gg'(z_i-z_i')\mid i\in\{1,\hdots,n\}\}$. By previous lemma we have $$[(z_{t_1}-z_{t_1}')\cdots(z_{t_{k-1}}-z_{t_{k-1}}')].[g.g'(z_i-z_i')]\in I_{2k}(M_D),$$
		
		\noindent for all $t_1,\hdots,t_{k-1},i\in\{1,\hdots,n\}$. Therefore, $$I_{\Delta}^{k-1}I_2((I_k(M))_D)\subseteq I_{2k}(M_D).$$ 
	\end{proof}
	
	Now, we start to get conditions in order to obtain the second condition of Proposition \ref{T4.17}.
	
	\begin{lemma}
		Let $x\in X$, $M$ an $\cO_{X,x}$-submodule of $\cO_{X,x}^p$ and $k\in\bN$. Denote by $[\tilde{M}]$ the matrix such that $$[M_D]=\left[\begin{matrix}
			[M]        &     0 \\
			[M]'        &    [\tilde{M}]
		\end{matrix}\right].$$
		
		Let $\cI_{2k}(M_D)$ be the subideal of $I_{2k}(M_D)$ generated by $$\{\det(M_{IJ})\det(\tilde{M}_{KL})\mid I,J,K,L\mbox { are }k\mbox{-indexes}\}.$$
		
		Then, $$\cI_{2k}(M_D)\subseteq I_{\Delta}^{k-1}I_2((I_k(M))_D).$$
	\end{lemma}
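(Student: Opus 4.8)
The plan is to argue on generators. With respect to the generating set $\cB$ of Proposition \ref{P2}(a), if $\{g_1,\ldots,g_r\}$ generates $M$ then the columns of $[\tilde M]$ are exactly the vectors $(z_i-z_i')(g_j\circ\pi_2)$ for $i\in\{1,\ldots,n\}$ and $j\in\{1,\ldots,r\}$; equivalently, $[\tilde M]$ is the matrix obtained by multiplying the $j$-th column of $[M]'=[M]\circ\pi_2$ by $z_i-z_i'$, over all $i$. Fix $k$-indices $I,J$ of $[M]$ and $k$-indices $K$ (rows) and $L$ (columns of $[\tilde M]$), and suppose $L$ selects the columns of $[\tilde M]$ attached to the pairs $(i_1,j_1),\ldots,(i_k,j_k)$. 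Each column of $\tilde M_{KL}$ is then a scalar multiple of a column of $[M]'$, so pulling the scalar $z_{i_b}-z_{i_b}'$ out of the $b$-th column of $\tilde M_{KL}$ for each $b$ yields
\[
\det(\tilde M_{KL}) \;=\; \pm\Big(\prod_{b=1}^{k}(z_{i_b}-z_{i_b}')\Big)\,\det(M_{KJ^{\ast}}'),
\]
where $J^{\ast}=(j_1,\ldots,j_k)$, reordered to be increasing at the cost of a sign. If the $j_b$ are not pairwise distinct, two columns of $\tilde M_{KL}$ are proportional, this determinant vanishes, and that generator is trivially in the right-hand ideal; so we may assume the $j_b$ distinct.

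Hence a generator of $\cI_{2k}(M_D)$ equals, up to sign,
\[
\det(M_{IJ})\det(\tilde M_{KL})\;=\;\Big(\prod_{b=1}^{k-1}(z_{i_b}-z_{i_b}')\Big)\cdot\Big[(z_{i_k}-z_{i_k}')\,\det(M_{IJ})\,\det(M_{KJ^{\ast}}')\Big].
\]
The first factor lies in $I_{\Delta}^{k-1}$ by the very definition of $I_{\Delta}$ (for $k=1$ it is the empty product $1$). For the bracketed factor I would invoke the explicit matrix $[(I_k(M))_D]$ from the proof of Lemma \ref{L4.20}: among its columns are the vector $(\det(M_{IJ}),\det(M_{IJ}'))$ and the vector $(0,(z_{i_k}-z_{i_k}')\det(M_{KJ^{\ast}}'))$, and the $2\times 2$ minor of $[(I_k(M))_D]$ built from precisely these two columns is
\[
\det\begin{pmatrix}\det(M_{IJ}) & 0\\ \det(M_{IJ}') & (z_{i_k}-z_{i_k}')\det(M_{KJ^{\ast}}')\end{pmatrix}=(z_{i_k}-z_{i_k}')\,\det(M_{IJ})\,\det(M_{KJ^{\ast}}').
\]
Thus the bracketed factor belongs to $I_2((I_k(M))_D)$, whence $\det(M_{IJ})\det(\tilde M_{KL})\in I_{\Delta}^{k-1}I_2((I_k(M))_D)$.

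Since $\cI_{2k}(M_D)$ is generated by such elements, the asserted inclusion follows. I do not expect any genuine difficulty here: the content is simply that every $k\times k$ minor of the block $[\tilde M]$ carries $k$ factors $z_i-z_i'$, one of which can be absorbed into a $2\times 2$ minor of $[(I_k(M))_D]$ while the remaining $k-1$ sit in $I_{\Delta}^{k-1}$. The only care needed is the bookkeeping: that $I,J,K,L$ are four independent $k$-indices, that the column multi-index $J^{\ast}$ extracted from $L$ may have repetitions (giving a zero term) or need reordering (giving only a sign), and that the extracted expression is matched with a minor of $[(I_k(M))_D]$ itself rather than of a larger matrix.
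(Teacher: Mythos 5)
Your proof is correct and takes essentially the same route as the paper's: reduce to generators, observe that each column of $\tilde M_{KL}$ is a column of $[M]'$ scaled by some $z_i-z_i'$, dispose of repeated columns via a vanishing determinant, factor out the $k$ scalars, and then split them as $k-1$ factors landing in $I_\Delta^{k-1}$ times one factor absorbed into a $2\times 2$ minor of $[(I_k(M))_D]$. The only difference is that you display the relevant $2\times 2$ minor of $[(I_k(M))_D]$ explicitly and track the column indices $(i_b,j_b)$ more carefully than the paper does (which writes $\det(M'_{K'L'})$ somewhat loosely), but this is the same argument, just with the bookkeeping made visible.
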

	
	\begin{proof}
		It suffices to prove that each generator of $\cI_{2k}(M_D)$ belongs to $I_{\Delta}^{k-1}I_2((I_k(M))_D)$.
		
		The columns of $\tilde{M}_{KL}$ are columns of $[M]'$ possibly in a different order with terms of type $z_i-z_i'$ multiplied on each column. If the $k$-indexes $K,L$ pick repeated columns then $\det(\tilde{M}_{KL})=0$ and we are done. If this does not occur, then 
		$$\det(\tilde{M}_{KL})=(z_{i_1}-z_{i_1}')\cdots(z_{i_k}-z_{i_k}')(\pm \det(M'_{K'L'}))$$ for some reorganization $k$-indexes $K',L'$, where $i_1,\hdots,i_k\in\{1,\hdots,n\}$ are the indexes which $z_{i_1}-z_{i_1}',\hdots,z_{i_k}-z_{i_k}'$ appear on each of the $k$ columns of $\tilde{M}_{KL}$. Since $\pm(z_{i_1}-z_{i_1}')\cdots(z_{i_{k-1}}-z_{i_{k-1}}')\in I_{\Delta}^{k-1}$ and $$\det(M_{IJ})(z_{i_k}-z_{i_k}')\det(M'_{KL})\in I_2((I_k(M))_D)$$\noindent then $\det(M_{IJ})\det(\tilde{M}_{KL})\in I_{\Delta}^{k-1}I_2((I_k(M))_D)$.
	\end{proof}
	
	The next result ensures that any free submodule satisfies the second condition of Proposition \ref{T4.17} for a suitable power of the ideal coming from the diagonal.
	
	\begin{lemma}
		Let $x\in X$ and $M$ a free $\cO_{X,x}$-submodule of $\cO_{X,x}^p$ of rank $k\in\bN$. Then $$I_{2k}(M_D)\subseteq I_{\Delta}^{k-1}I_2((I_k(M))_D).$$
	\end{lemma}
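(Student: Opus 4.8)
The plan is to reduce to the preceding lemma by proving that, when $M$ is free of rank $k$, the ideal $\cI_{2k}(M_D)$ occurring there already equals $I_{2k}(M_D)$; equivalently, that every $2k\times 2k$ minor of a suitable matrix of generators of $M_D$ is either $0$ or exactly one of the generators $\det(M_{IJ})\det(\tilde{M}_{KL})$ of $\cI_{2k}(M_D)$.

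First I would fix a basis $g_1,\dots,g_k$ of $M$ and take $[M]$ to be the corresponding $p\times k$ matrix of generators; then $I_k(M)$ is generated by the maximal minors $\det(M_{I})$ over $k$-indices of rows $I$, and by Proposition \ref{P2}(a) the $2p\times(k+nk)$ block matrix displayed in the preceding lemma, with columns $(g_1)_D,\dots,(g_k)_D$ followed by the $(0,(z_i-z_i')(g_j\circ\pi_2))$, may be used as a matrix of generators of $M_D$. In particular every column of $[\tilde{M}]$ is a multiple, by some $z_i-z_i'$, of one of the $k$ columns of $[M]'$.

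Next I would examine an arbitrary $2k\times 2k$ submatrix $N$ of this matrix, say using $r$ of the top $p$ rows and $s=2k-r$ of the bottom $p$ rows, and $a$ of the first $k$ columns and $b=2k-a$ of the last $nk$ columns, so that $a\le k$. Since the top $r$ rows of $N$ vanish outside its first $a$ columns, in the Laplace expansion of $\det N$ along those rows only the terms whose $r$ chosen columns lie among the first $a$ survive, which forces $r\le a\le k$, hence $s\ge k$. In each surviving term the complementary $s\times s$ minor is formed from the bottom $s$ rows together with $s$ columns that, restricted to those rows, are scalar multiples over $\cO_{X\times X,(x,x)}$ of the $k$ columns of $[M]'$; if $s>k$ then two of them are multiples of one and the same column of $[M]'$, and factoring those two scalars out of the determinant leaves two equal columns, so the minor is $0$. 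Thus a nonzero $\det N$ forces $s\le k$, hence $r=s=k$ and, together with $r\le a\le k$, also $a=b=k$; and with $r=a=k$ the Laplace expansion collapses to a single term whose column set is the whole first block, giving $\det N=\pm\det(M_{I,(1,\dots,k)})\det(\tilde{M}_{KL})$ for suitable $k$-indices $I,K,L$. Hence $I_{2k}(M_D)=\cI_{2k}(M_D)$, and the preceding lemma yields $I_{2k}(M_D)\subseteq I_{\Delta}^{k-1}I_2((I_k(M))_D)$. If one prefers not to invoke that lemma, the same conclusion follows at once: writing $\det(\tilde{M}_{KL})=\pm(z_{i_1}-z_{i_1}')\cdots(z_{i_k}-z_{i_k}')\det(M'_{K'})$ for a reordered $k$-index $K'$, the factor $(z_{i_k}-z_{i_k}')\det(M_{I})\det(M'_{K'})$ is a $2\times 2$ minor of $[(I_k(M))_D]$ and hence lies in $I_2((I_k(M))_D)$, while the remaining $k-1$ factors $z_{i_j}-z_{i_j}'$ lie in $I_{\Delta}^{k-1}$.

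The step I expect to be the main obstacle is the combinatorial bookkeeping in the minor analysis: making precise that a nonzero $2k\times 2k$ minor must use exactly $k$ rows from each of the two horizontal blocks, all $k$ of the first columns, and $k$ columns of $[\tilde{M}]$ lying over pairwise distinct columns of $[M]'$, and being careful that the vanishing-of-proportional-columns argument is valid over the local ring $\cO_{X\times X,(x,x)}$ and not merely over a field. Once the shape of every such minor is identified, the reduction to the preceding lemma (or the direct verification) is routine.
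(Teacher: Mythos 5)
Your proof is correct and follows essentially the same strategy as the paper's: use the block description of a generating matrix $[M_D]$ with $[M]$ over $[M]'$ on the left and $0$ over $[\tilde{M}]$ on the right (valid because $M$ is free of rank $k$), examine an arbitrary $2k\times 2k$ minor $\det N$, and reduce the only nontrivial shape to a generator $\det(M_{IJ})\det(\tilde{M}_{KL})$ of $\cI_{2k}(M_D)$, hence to the preceding lemma (or, in your alternative, directly to a $2\times 2$ minor of $[(I_k(M))_D]$ times $I_{\Delta}^{k-1}$). Your bookkeeping is slightly more complete than the paper's: you explicitly track the number $r$ of rows of $N$ drawn from the top block and show, via Laplace expansion along those rows together with the pigeonhole observation that the bottom half of $[M_D]$ has all its columns proportional to the $k$ columns of $[M]'$, that a nonzero minor forces $r=s=k$ and $a=b=k$, whereas the paper's displayed block decompositions implicitly take $k$ rows from each half without discussing the other choices.
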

	
	\begin{proof}
		We have that $$[M_D]=\left[
		\begin{matrix}
			[M]_{p\times k}   &    |    &     0   \\
			------        &    |    &  -------  \\
			[M]'_{p\times k} & |    &  (z_i-z_i')[M]'
			
		\end{matrix}
		\right] $$
		varying $i\in\{1,\hdots,n\}$.
		
		Let $d\in I_{2k}(M_D)$ be an arbitrary generator. Then $d=\det N$ where $N$ is a $2k\times 2k$ submatrix of $[M_D]$.
		
		(i) Suppose first that there are $k+t$ columns of $N$ taken on the part $$\left[\begin{matrix}
			0 \\
			(z_i-z_i')[M]'
		\end{matrix}\right]$$
		of $[M_D]$, with $1\leq t \leq k$. Then we can write $$N=\left[\begin{matrix}
			(M_{IJ})_{k\times(k-t)}    &    |    &   0_{k\times({k+t})}\\
			--------         &    |    &   --------\\
			(M_{IJ})'_{k\times(k-t)}   &    |    &  (\tilde{M})_{KL}
		\end{matrix}\right]$$
		
		$$=\left[\begin{matrix}
			M_{IJ}   &    0_{k\times t}  &   |  &   0_{k\times k}\\
			---    & ---        &   |  &  ------\\
			M_{IJ}'  &     *             &   |  &    **
		\end{matrix}\right]$$ for some matrices * and **, where ** is a square matrix of size $k\times k$. So in this case we have that $$d=\det N=\det\left[\begin{matrix}
			M_{IJ}  &   0_{k\times t}
		\end{matrix}\right]\cdot\det(**)=0\in I_{\Delta}^{k-1}I_2((I_k(M))_D).$$  	
		
		(ii) Now suppose that we have exactly $k$ columns of $N$ taken on the part $$\left[\begin{matrix}
			0 \\
			(z_i-z_i')[M]'
		\end{matrix}\right]$$
		of $[M_D]$. Then we can write $$N=\left[\begin{matrix}
			M_{IJ}    &    |      &     0_{k\times k}  \\
			------    &    |      &     ------ \\
			M_{IJ}'   &    |      &     \tilde{M}_{KL}
		\end{matrix}\right].$$
		
		Thus, $d=\det(M_{IJ})\det(\tilde{M}_{KL})\in\cI_{2k}(M_D)$ and by previous lemma we conclude that $d\in I_{\Delta}^{k-1}I_2((I_k(M))_D)$.
	\end{proof}
	
	As a consequence of the previous lemma, we have a result that states the second condition of Proposition \ref{T4.17} holds locally in a dense Zariski open subset of $X$ in a sheaf of submodules of $\cOXp$.
	
	\begin{lemma}\label{L4.23}
		Let $\cM$ be an $\cO_X$-submodule of $\cOXp$ of generic rank $k$ on each component of $X$. Then there exists a dense Zariski open subset $U$ of $X$ such that:
		
		\begin{itemize}
			\item $U\cap V$ is a dense Zariski open subset of $V$, $\forall V$ component of $X$;
			
			\item $I_k(\cM)=\cO_{X,x}$ at $x, \forall x\in U$;
			
			\item $I_{2k}(\cM_D)\subseteq I_{\Delta}^{k-1}I_2((I_k(\cM))_D)$ at $(x,x)$, for all $x\in U$. 
		\end{itemize}

	\end{lemma}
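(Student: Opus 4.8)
The plan is to produce $U$ on which $\cM$ is \emph{locally free of rank $k$}; once that is done the third bullet is immediate from the previous lemma (the one for free submodules) applied to the stalk $\cM_x$ at $(x,x)$, and the first two bullets will fall out of the construction of $U$.

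Concretely, I would set $U:=X\setminus Z$, where $Z$ is the zero-set of the ideal $I_k(\cM)$, i.e. the analytic subset on which every $k\times k$ minor of a matrix of generators $[\cM]$ vanishes. Since $\cM$ has generic rank $k$ on each irreducible component $V_i$ of $X$, the ideal $I_k(\cM)$ does not vanish identically on $V_i$, so $Z\cap V_i$ is a proper analytic subset of $V_i$; hence $U$ is Zariski open in $X$ and $U\cap V_i=V_i\setminus Z$ is a dense Zariski open subset of $V_i$ for every $i$ (first bullet). If $x\in U$, some $k\times k$ minor of $[\cM]$ is a unit in $\cO_{X,x}$, so the ideal it generates is everything, i.e. $I_k(\cM)_x=\cO_{X,x}$ (second bullet).

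The crux is the claim that $\cM_x$ is a free $\cO_{X,x}$-module of rank $k$ for every $x\in U$. I would first observe that $I_{k+1}(\cM)=0$: the $(k+1)\times(k+1)$ minors of $[\cM]$ vanish on a dense subset of each $V_i$ by the generic-rank hypothesis, hence vanish identically since $X$ is reduced. Then, for $x\in U$, choose after reordering a $k\times k$ submatrix $A$ of $[\cM]=[g_1,\dots,g_r]$, sitting on rows $I=(i_1,\dots,i_k)$ and columns $J=(j_1,\dots,j_k)$, with $\det A$ a unit at $x$, so $A$ is invertible over $\cO_{X,x}$. For each generator $g_j$ and each row index $m$, the $(k+1)\times(k+1)$ minor of $[\cM]$ on rows $I\cup\{m\}$ and columns $J\cup\{j\}$ belongs to $I_{k+1}(\cM)=0$; expanding it by the Schur-complement identity $\det\bigl(\begin{smallmatrix}A&b\\ c^{t}&d\end{smallmatrix}\bigr)=\det(A)(d-c^{t}A^{-1}b)$ and dividing by the unit $\det A$ yields $g_j^{(m)}=\sum_{l=1}^{k}\lambda_l\,g_{j_l}^{(m)}$ with $\lambda:=A^{-1}(g_j)_I\in\cO_{X,x}^k$ independent of $m$; thus $g_j=\sum_l\lambda_l\,g_{j_l}$, so $\{g_{j_1},\dots,g_{j_k}\}$ generates $\cM_x$, and these $k$ columns are $\cO_{X,x}$-independent because any relation among them, restricted to the rows $I$, reads $A\mu=0$ and forces $\mu=0$. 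Hence $\cM_x\cong\cO_{X,x}^k$.

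Finally, applying the previous lemma to the free module $\cM_x$ gives $I_{2k}(\cM_D)\subseteq I_{\Delta}^{k-1}I_2((I_k(\cM))_D)$ at $(x,x)$ for all $x\in U$ (third bullet). The main obstacle, such as it is, is exactly this local-freeness step — recognizing and proving that ``generic rank $k$'' together with ``$I_k(\cM)_x$ is the unit ideal'' forces $\cM$ to be free of rank $k$ at $x$; the rest is bookkeeping with the open set and the determinant identities.
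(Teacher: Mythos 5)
Your proposal is correct and follows essentially the same route as the paper: use the non-vanishing of $I_k(\cM)$ to obtain a dense Zariski open $U$ on which $\cM$ is locally free of rank $k$ (via Cramer's rule), then invoke the previous lemma on the free stalk $\cM_x$. Your write-up is more careful than the paper's terse one-paragraph argument — you make explicit that $I_{k+1}(\cM)=0$ by reducedness, give the Schur-complement identity that proves the $k$ chosen columns generate, and let the choice of $k\times k$ minor vary with $x\in U$, which cleanly handles the case of several irreducible components where a single global choice of columns might not serve.
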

	
	\begin{proof}
		Consider $[\cM]$ a matrix of generators of $\cM$. Using Cramer's rule, we can choose $k$ $\cO_X$-linear independents columns of $[\cM]$ such that these columns generates $\cM$ in such a dense Zariski open subset $U$ of $X$ and $I_k(\cM)=\cO_X$ along $U$. Let $\cM_k$ be the $\cO_X$-submodule of $\cOXp$ generated by the columns chosen above. Thus, given $x\in U$, we have that $\cM_x=(\cM_k)_x$ is a free $\cO_{X,x}$-submodule of $\cO_{X,x}^p$ of rank $k$ and the desired inclusion is a consequence of the previous lemma.  
	\end{proof}
	
	Before we state the main theorem of this section, we prove that the generic rank of $(h,\cM)$ is the same as the generic rank of $\cM$, provided $h\in\overline{\cM}$.
	
	\begin{lemma}
		Let $\cM$ be an $\cO_{X}$-submodule of $\cO_{X}^p$ of generic rank $k$ on each component of $X$ at $x\in X$. If $h\in\overline{\cM}$ at $x$ then $(h,\cM)$ also has generic rank $k$ on each component of $X$ at $x$.
	\end{lemma}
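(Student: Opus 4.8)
The statement to prove is: if $\cM$ has generic rank $k$ on each component of $X$ at $x$, and $h\in\overline{\cM}$ at $x$, then $(h,\cM)$ also has generic rank $k$ on each component of $X$ at $x$.

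The plan is to work component by component, reducing immediately to the case where $(X,x)$ is irreducible. Fix an irreducible component $V$ of $X$ and let $\cM_V$ denote the submodule of $\cO_{V,x}^p$ induced by $\cM$; by hypothesis $\cM_V$ has generic rank $k$ on $V$, and since integral closure is compatible with restriction to components (as used repeatedly in the excerpt, e.g.\ in Proposition \ref{proposition G}), we have $h|_V\in\overline{\cM_V}$ at $x$. So it suffices to show: on an irreducible germ $(V,x)$, adjoining an element of $\overline{\cM_V}$ to $\cM_V$ does not change the generic rank.

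For this, the key observation is that the generic rank of a submodule of $\cO_{V,x}^p$ is the largest $k$ for which $I_k(\cM_V)\neq 0$, equivalently the generic rank equals $\max\{k : I_k(\cM_V)\not\subseteq \mathfrak{p}_{\mathrm{gen}}\}$ where one tests at the generic point of $V$. Clearly $I_k((h,\cM_V))\supseteq I_k(\cM_V)\neq 0$, so the generic rank of $(h,\cM_V)$ is at least $k$; the content is the reverse inequality, i.e.\ that $I_{k+1}((h,\cM_V))$ vanishes generically on $V$. Now $I_{k+1}(\cM_V)=0$ at the generic point since $\cM_V$ has generic rank exactly $k$, so every $(k+1)\times(k+1)$ minor of a generator matrix of $(h,\cM_V)$ that does \emph{not} involve the column $h$ vanishes generically. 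For the minors that do involve $h$: expand along the $h$-column, so such a minor is an $\cO_{V,x}$-combination of the $k\times k$ minors $\cJ_{IJ}(h,\cM_V)=\cJ_{IJ}(\text{columns among }h\text{ and generators})$. The cleanest route is to invoke the characterization $h\in\overline{\cM_V}\iff I_k((h,\cM_V))\subseteq\overline{I_k(\cM_V)}$ from \cite{G3} (quoted just before Proposition \ref{proposition G}): this gives $\overline{I_k((h,\cM_V))}=\overline{I_k(\cM_V)}$, and in particular $I_k((h,\cM_V))$ and $I_k(\cM_V)$ have the same generic vanishing locus on $V$. Then one argues that a $(k+1)$-minor of $(h,\cM_V)$ lies in $I_k(\cM_V)\cdot(\text{maximal ideal})$-type expressions so that its generic vanishing follows; more directly, pass to the generic point $\eta$ of $V$: over the field $\kappa(\eta)$, the module $\cM_V\otimes\kappa(\eta)$ has rank $k$, and $h\otimes\kappa(\eta)$ lies in $\overline{\cM_V}\otimes\kappa(\eta)$, which over a field equals the $\kappa(\eta)$-span of $\cM_V\otimes\kappa(\eta)$; hence $h\otimes\kappa(\eta)$ is a linear combination of the generators, so it does not raise the rank and $(h,\cM_V)\otimes\kappa(\eta)$ still has rank $k$. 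This is exactly the definition of generic rank $k$ for $(h,\cM_V)$ on $V$.

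The main obstacle is the step asserting that $h\in\overline{\cM_V}$ forces $h$ to become a genuine linear combination of the generators after passing to the residue field of the generic point. This needs the standard fact that integral closure of a module localizes and that, over a field (or a regular point), a module coincides with its own integral closure, so that $\overline{\cM_V}$ and $\cM_V$ agree at the generic point. I would justify this via the curve criterion for integral closure of modules together with generic smoothness of $V$: pick a general curve $\phi$ through a general smooth point of $V$; then $\pi_S^\ast$/pullback arguments as in the excerpt show $h\circ\phi\in\phi^\ast(\cM_V)$, and since the generic rank is detected along such general curves, the rank of $(h,\cM_V)$ along $\phi$ equals the rank of $\cM_V$ along $\phi$, which is $k$. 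Once this localization/genericity bookkeeping is in place, the rest is formal, and reassembling the components gives the full statement.
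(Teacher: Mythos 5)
Your core idea — that at a generic nearby point $x'$ the evaluated vector $h(x')$ must already lie in the $\bC$-linear span of the evaluated generators of $\cM$, so adjoining $h$ cannot raise the generic rank — is exactly the content of the paper's argument, but the paper's execution is considerably more direct than the route you sketch. Instead of passing through the ideals $I_k$ and then base-changing to the residue field of the generic point (a step you yourself flag as needing the fact that integral closure localizes, and which you then propose to patch via generic smoothness and curve arguments), the paper uses just two observations: first, $h\in\overline{\cM}$ at $x$ persists for all $x'$ in an open neighborhood of $x$; second, applying the valuative criterion at such an $x'$ with the \emph{constant} curve $\phi(t)\equiv x'$ forces $h(x')$ into the $\bC$-linear span of the $g_i(x')$. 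If $(h,\cM)$ had generic rank $k+1$ on some component, one could choose $x'$ in that component (inside the neighborhood) where the columns of $(h,\cM)$ evaluate to a $(k+1)$-dimensional span while those of $\cM$ evaluate to a $k$-dimensional span, so $h(x')$ would fall outside the span — contradicting the constant-curve consequence. Your residue-field argument can be made to work once the localization of integral closure is justified, but the constant-curve trick is precisely the clean way to obtain that justification and lets you skip the $I_k$-minor and generic-smoothness detours entirely.
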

	
	\begin{proof}
		Since $h\in\overline{\cM_x}$ then $h\in\overline{\cM_{x'}}$, for all $x'$ in an open neighborhood $U$ of $x$ in $X$. Suppose the rank of $(h,\cM)$ is $k+1$. Then $h\circ x'\notin \cM\circ x'$. But, $x'$ is a constant curve, so $h\notin\overline{\cM_{x'}}$, contradiction.
	\end{proof}
	
	The next theorem is the main objective of this work and establishes that the three defined Lipschitz saturations are generically equivalent on a sheaf of submodules of $\cOXp$.
	
	\begin{theorem}
		Let $\cM$ be an $\cO_X$-submodule of $\cO_X^p$ of generic rank $k$ on each component of $X$. Then there exists a dense Zariski open subset $U$ of $X$ 
		such that	$$ \cM_{S_1}=\cM_{S_2}=\cM_{S_3}$$ along $U$. 
	\end{theorem}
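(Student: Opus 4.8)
The plan is to show that the dense Zariski open set $U$ produced by Lemma~\ref{L4.23} already does the job, and in fact that $\cM_{S_1}=\cM_{S_2}=\cM_{S_3}=\cM$ along $U$. Since Propositions~\ref{P23} and \ref{P22} give $\cM_{S_1}\subseteq\cM_{S_2}\subseteq\cM_{S_3}$ everywhere, and since $\cM\subseteq\cM_{S_1}$ (Proposition~\ref{P15}) and $\cM_{S_3}\subseteq\overline{\cM}$ by the basic properties established in Section~\ref{sec3}, the whole theorem reduces to the single assertion that $\overline{\cM_x}=\cM_x$ for every $x\in U$; once this generic integral-closedness is known, the chain $\cM_x\subseteq(\cM_x)_{S_1}\subseteq(\cM_x)_{S_2}\subseteq(\cM_x)_{S_3}\subseteq\overline{\cM_x}=\cM_x$ closes up.

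So the heart of the matter is the integral-closedness step. Here is how I would carry it out. By the construction in Lemma~\ref{L4.23}, for $x\in U$ the stalk $\cM_x$ is a free $\cO_{X,x}$-module of rank $k$ with a basis $g_1,\dots,g_k$, and $I_k(\cM_x)=\cO_{X,x}$; permuting the coordinates of $\bC^p$, one may assume the submatrix $A$ of $[\,g_1\ \cdots\ g_k\,]$ on the first $k$ rows lies in $\mathrm{GL}_k(\cO_{X,x})$. Given $h\in\overline{\cM_x}$, I would first replace it by $h-\sum_{j}c_jg_j\in\overline{\cM_x}$, with $(c_1,\dots,c_k)^{T}:=A^{-1}(h_1,\dots,h_k)^{T}\in\cO_{X,x}^{k}$, so as to assume the first $k$ coordinates of $h$ vanish, and then feed this into the curve criterion: for an arbitrary analytic $\phi\colon(\bC,0)\to(X,x)$ we have $h\circ\phi=\sum_j a_j(g_j\circ\phi)$ with $a_j\in\cO_{\bC,0}$, and comparing the first $k$ entries gives $(A\circ\phi)(a_1,\dots,a_k)^{T}=0$, whence all $a_j=0$ since $A\circ\phi$ is invertible, whence $h\circ\phi=0$; as $X$ is reduced this forces $h=0$, i.e. the original $h$ lies in $\cM_x$. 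This proves $\overline{\cM_x}=\cM_x$ and finishes the argument.

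If one prefers to stay within the framework of Section~\ref{sec3}, the only nontrivial inclusion $\cM_{S_3}\subseteq\cM_{S_1}$ on $U$ can instead be obtained from Proposition~\ref{T4.17}, applied at each $x\in U$ with the ideal $I:=I_{\Delta}^{\,k-1}$: its first hypothesis is exactly Lemma~\ref{L4.20}(b), valid because $I_k(\cM_x)=\cO_{X,x}$ is principal; and for the second hypothesis one again uses that $h\in(\cM_x)_{S_3}$ lies in $\overline{\cM_x}=\cM_x$, so $(h,\cM_x)=\cM_x$ has generic rank $k$ (by the lemma just preceding the theorem) and the required inclusion collapses to $I_{2k}((\cM_x)_D)\subseteq I_{\Delta}^{k-1}I_2((I_k(\cM_x))_D)$, which is the third bullet of Lemma~\ref{L4.23}. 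Either way, I expect the genuine obstacle to be precisely the integral-closedness step, the delicate point being that it uses the unit-minor condition $I_k(\cM)=\cO_X$ on $U$ in an essential way. Passing from a point to a neighborhood and handling the several irreducible components of $X$ requires no new idea --- it goes componentwise, as in Lemma~\ref{L4.23} and Corollary~\ref{C2.10} --- and since $\cM_{S_i}=\cM$ is a stalkwise statement, no further sheaf-theoretic subtlety arises.
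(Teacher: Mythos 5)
Your proposal is correct, but it follows a genuinely different and more elementary route than the paper's. The key move you make --- observing that on the open set of Lemma~\ref{L4.23} the stalk $\cM_x$ is a free direct summand of $\cO_{X,x}^p$ (free of rank $k$ with a unit $k\times k$ minor, hence with free cokernel), so that $\overline{\cM_x}=\cM_x$ --- does not appear in the paper, and it collapses the whole chain $\cM\subseteq\cM_{S_1}\subseteq\cM_{S_2}\subseteq\cM_{S_3}\subseteq\overline{\cM}$ to equalities in one stroke. The integral-closedness step itself is sound: after subtracting $\sum_j c_jg_j$ so that $h$ vanishes in the first $k$ coordinates, the curve criterion forces $(A\circ\phi)(a_1,\dots,a_k)^T=0$ with $A\circ\phi$ invertible, so $h\circ\phi=0$ for every test curve, and since $X$ is reduced $h=0$; equivalently, $h$ maps to $0$ in the free quotient $\cO_{X,x}^p/\cM_x$. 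This gives the strictly stronger conclusion $\cM_{S_1}=\cM_{S_2}=\cM_{S_3}=\cM=\overline{\cM}$ on $U$.

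The paper instead proceeds via Proposition~\ref{T4.17}: it first shrinks to a dense open $W$ on which $\cM_{S_3}$ is of finite type, picks generators $h_1,\dots,h_r$ there, applies Lemma~\ref{L4.23} to each module $(h_i,\cM)$ separately to get dense opens $U_i$ on which $I_{2k}((h_i)_D,\cM_D)\subseteq I_\Delta^{k-1}I_2\bigl((I_k(h_i,\cM))_D\bigr)$, uses Lemma~\ref{L4.20}(b) for the other hypothesis of Proposition~\ref{T4.17} on an open $U_0$ where $I_k(\cM)$ is the unit ideal, and then applies Proposition~\ref{T4.17} with $I=I_\Delta^{k-1}$ to push each $h_i$ from $\cM_{S_3}$ into $\cM_{S_1}$. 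That route never invokes generic integral-closedness of $\cM$ and genuinely exercises the double/determinantal machinery of Section~3, which is presumably part of the point of the paper. Your ``second route'' --- applying Proposition~\ref{T4.17} but collapsing $(h,\cM_x)$ to $\cM_x$ --- is not actually an independent avenue, since it again presupposes $\overline{\cM_x}=\cM_x$; the paper's version avoids this by treating $(h_i,\cM)$ as a new module and reapplying Lemma~\ref{L4.23} to it. So: your argument is shorter and proves more, but it also trivializes the statement; the paper's proof is heavier yet illustrates the technology it is meant to showcase.
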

	
	\begin{proof}
		
		Since the equalities mentioned above can be verified on each irreducible component of $X$, we may assume that $X$ is irreducible. We are dealing with a complex analytic variety $X$, so it is well-known that $\cO_X$ is a noetherian sheaf of rings. Furthermore, Oka's Coherence Theorem guarantees that $\cO_X^p$ is a coherent $\cO_X$-module. Consequently, there exists a non-empty open subset $W$ of $X$ (which is dense, as $X$ is irreducible) such that $\cM_{S_3}\mid_{W}$ is of finite type over $\cO_X$. Choose $h_1,...,h_r$ generators of $\cM_{S_3}$ on W.

		Thus, $h_i\in\overline{\cM}$ at points of $W$, $\forall i\in\{1,\hdots,r\}$ then by previous lemma $(h_i,\cM)$ also has generic rank $k$ on $X$ at points of $W$, $\forall i\in\{1,\hdots,r\}$. By Lemma \ref{L4.23}, for each $i\in\{1,\hdots,r\}$ there exists a dense Zariski open subset $U_i$ of $X$ such that $$I_{2k}((h_i)_D,\cM_D)\subseteq I_{\Delta}^{k-1}I_2((I_k(h_i,\cM))_D)\mbox{ at }(x,x),$$ $\forall x\in U_i\cap W$. Also by Lemma \ref{L4.23} there exists a dense Zariski open subset $U_0$ of $X$ such that $I_k(\cM)=\cO_{X,x}$ which is principal at $x$, $\forall x\in U_0$. By Lemma \ref{L4.20}(b), we conclude that $$I_{\Delta}^{k-1}I_2((I_k(\cM))_D)\subseteq I_{2k}(\cM_D)\mbox{ at }(x,x),$$
		$\forall x\in U_0$. Taking $$U:=\left(\bigcap\limits_{j=0}^r U_j\right)\cap W,$$ \noindent it follows that $U$ is a dense Zariski open subset of $X$. We already know that $\cM_{S_1}\subseteq \cM_{S_2}\subseteq \cM_{S_3}$ at every point of $X$, in particular, along $U$. Let us prove that $\cM_{S_3}\subseteq \cM_{S_1}$ along $U$. In fact, let $x\in U$. Given an arbitrary $i\in\{1,\hdots,r\}$, since $x\in U_0$ and $x\in U_i$ then: 
		\begin{itemize}
			\item $I_{\Delta}^{k-1}I_2((I_k(\cM))_D)\subseteq I_{2k}(\cM_D)\mbox{ at }(x,x)$;
			
			\item 	$I_{2k}((h_i)_D,\cM_D)\subseteq I_{\Delta}^{k-1}I_2((I_k(h_i,\cM))_D)\mbox{ at }(x,x)$. 
		\end{itemize} 
		
		Since $h_i\in \cM_{S_3}$ at $x$ then by Proposition \ref{T4.17} we have that $h_i \in \cM_{S_1}$ at $x$, $\forall i\in\{1,\hdots,r\}$. Since $\cM_{S_3}$ at $x$ is generated by $h_1,\hdots,h_r$ at $x$, then $\cM_{S_3}\subseteq \cM_{S_1}$ at $x$.
	\end{proof}
	
	In the next example, we show that, in general, $\cM_{S_3}$ is not a submodule of $\cM_{S_1}$.
	
	\begin{example}\normalfont
		Consider the submodule $M$ of $\cO_2^2$ given by $[M]=\left[\begin{matrix}
			x  &  0  &   y\\
			y  &  x  &   0
		\end{matrix}\right]$ and let $h=(x,3y)$. Thus, $$I_2(M)=\langle  x^2,xy,y^2\rangle=I_2(h,M).$$ 
		
		In particular, $h\in M_{S_3}$. On the other hand, consider the curve $\Phi:(\bC,0)\rightarrow(\bC^2\times\bC^2,(0,0))$ given by $\Phi(t)=(t,\alpha t, t, \beta t)$.
		
		We have that $$[\Phi^*(M_D)]=\left[\begin{matrix}
			t & 0 & \alpha t & 0 & 0 & 0 \\
			\alpha t & t & 0 &  0 & 0 & 0\\
			t & 0 & \beta t & (\alpha-\beta)t^2 & 0 & \beta(\alpha-\beta)t^2\\
			\beta t & t & 0 & \beta(\alpha-\beta)t^2 & (\alpha-\beta)t^2 & 0
		\end{matrix}\right]$$
		
		\noindent and $[\Phi^*(h_D)]=\left[\begin{matrix}
			t\\
			3\alpha t \\
			t\\
			3\beta t
		\end{matrix}\right]$. Let $D$ be the first $4\times 3$ submatrix of $[\Phi^*(M_D)]$. The complementary submatrix has degree 2, so we can ignore its terms. Notice that:
		
		$[\Phi^*(h_D)]-D\cdot e_1=\left[\begin{matrix}
			0\\
			2\alpha t \\
			0\\
			2\beta t
		\end{matrix}\right]$ and $[\Phi^*(h_D)]-D\cdot (2\alpha e_2)=\left[\begin{matrix}
			0\\
			0 \\
			0\\
			(2\beta-2\alpha) t
		\end{matrix}\right]$. However, the first $3$ vectors of $D$ generically have rank $3$. Therefore, $h\notin M_{S_1}$.
	\end{example}
	
	Following the author's idea in \cite{G1, SG} of defining Lipschitz infinitesimal conditions for the case of a family of complex analytic varieties, we observe that each previously defined Lipschitz saturation induces a type of infinitesimal condition. A promising future research problem is to investigate the geometric nature of each of these conditions and the potential differences among them.

	\vspace{0.5cm}
	
	{\sc Terence James Gaffney
		
		\vspace{0.5cm}
		
		{\small Department of Mathematics 
			
			Northeastern University
			
			{\tiny 567 Lake Hall - 02115 - Boston - MA, USA, t.gaffney@northeastern.edu }}}
	
	\vspace{0.5cm}	
	
	{\sc Thiago Filipe da Silva
		
		\vspace{0.5cm}
		
		{\small Department of Mathematics
			
			Federal University of Esp\'irito Santo
			
			{\tiny Av. Fernando Ferrari, 514 - Goiabeiras, 29075-910 - Vit\'oria - ES, Brazil, thiago.silva@ufes.br}}}
	
\end{document}